\documentclass[11pt, reqno]{amsart}
\usepackage{amscd}        

\usepackage[english]{babel}
\usepackage[utf8]{inputenc}
\usepackage{amsmath, amssymb}
\usepackage{enumitem}
\usepackage{enumitem}
\usepackage{booktabs}
\usepackage{comment}

\usepackage{booktabs}
\usepackage[svgnames, dvipsnames, table]{xcolor}
\definecolor{cornflowerblue}{HTML}{9ACEEB}
\definecolor{cornflowerblue-complement}{HTML}{2A96D2}
\usepackage[colorlinks = true, allcolors = cornflowerblue-complement, pagebackref=true, colorlinks, pdfencoding=auto]{hyperref}

\usepackage{tabu}

\usepackage{siunitx} 

\usepackage{orcidlink}

\usepackage{pdflscape}

\usepackage{float}

\usepackage{nomencl}

\usepackage{algorithm}
\usepackage{algpseudocode}

\usepackage{multirow}

\usepackage{tikz-cd}
\usepackage{graphicx}
\usepackage{rotating}
\usepackage{diagbox}
\usepackage{amssymb}
\usepackage{epstopdf}
\usepackage{tikz}
\usepackage{pgfplots}
\pgfplotsset{compat=1.18} 
\definecolor{UCRceleste}{RGB}{0,192,243}
\definecolor{Crimson}{RGB}{220, 20, 60}
\definecolor{Turquoise}{HTML}{04d4f0}
\definecolor{GunmetalGray}{HTML}{1ca6a0}
\definecolor{HotPink}{HTML}{f652a0}
\definecolor{BlueGrotto}{HTML}{059dc0}
\definecolor{GaloisBlue}{HTML}{6495ED}
\definecolor{GaloisRed}{HTML}{DE3163}
\definecolor{mintgreen}{RGB}{152,255,152}
\definecolor{pinksalmon}{RGB}{255,102,102}
\definecolor{hueso}{RGB}{245,245,220}
\definecolor{marfil}{RGB}{255,253,208}
\definecolor{amarillo}{RGB}{255,255,0}
\usetikzlibrary{decorations.markings,arrows}
\usetikzlibrary{decorations.pathreplacing}
\usepackage[inner=1.0in,outer=1.0in,bottom=1.0in, top=1.0in]{geometry}

\numberwithin{equation}{section}

\newtheorem{theorem}{Theorem}[section]
\newtheorem{lemma}[theorem]{Lemma}
\newtheorem{proposition}[theorem]{Proposition}
\newtheorem{corollary}[theorem]{Corollary}

\makeatletter
\def\moverlay{\mathpalette\mov@rlay}
\def\mov@rlay#1#2{\leavevmode\vtop{%
   \baselineskip\z@skip \lineskiplimit-\maxdimen
   \ialign{\hfil$\m@th#1##$\hfil\cr#2\crcr}}}
\newcommand{\charfusion}[3][\mathord]{
    #1{\ifx#1\mathop\vphantom{#2}\fi
        \mathpalette\mov@rlay{#2\cr#3}
      }
    \ifx#1\mathop\expandafter\displaylimits\fi}
\makeatother

\newcommand{\hooklongrightarrow}{\lhook\joinrel\longrightarrow}

\newcommand{\suchthat}{\;\ifnum\currentgrouptype=16 \middle\fi|\;}

\newcommand{\Z}{\mathbb{Z}}
\newcommand{\C}{\mathbb{C}}
\newcommand{\Q}{\mathbb{Q}}
\newcommand{\R}{\mathbb{R}}
\newcommand{\Gal}[1]{\operatorname{Gal}#1}
\newcommand{\Cl}{\operatorname{Cl}}
\newcommand{\op}[1]{\operatorname{#1}}

\newcommand{\ord}[1]{\operatorname{ord}#1}

\newcommand{\Nrm}[2]{\mathbf{N}_{#1}(#2)}

\theoremstyle{definition}
\newtheorem{remark}[theorem]{Remark}
\newtheorem{definition}[theorem]{Definition}
\newtheorem{example}[theorem]{Example}

\usepackage{listings}
\definecolor{sagebrown}{RGB}{176, 92, 10}
\definecolor{sageblue}{RGB}{44, 45, 254}
\definecolor{sagepurple}{RGB}{151, 57, 164}
\definecolor{sagegreen}{RGB}{18, 103, 68}
\definecolor{sagered}{RGB}{170, 16, 15}

\lstdefinestyle{SageMath-style}{
    backgroundcolor=\color{white},   
    commentstyle=\color{sagebrown},
    keywordstyle=\color{sagepurple},
    keywordstyle = [2]{\color{sageblue}},
    keywordstyle = [3]{\color{yellow}},
    numberstyle=\tiny\color{sagegreen},
    stringstyle=\color{sagered},
    basicstyle=\ttfamily\footnotesize,
    breakatwhitespace=false,         
    breaklines=true,                 
    captionpos=b,                    
    keepspaces=true,                 
    numbers=left,                    
    numbersep=5pt,                  
    showspaces=false,                
    showstringspaces=false,
    showtabs=false,                  
    tabsize=2
}

\begin{document}

\title{The distribution of $k$-free ideals in ray class groups}

\author[]{Adrian Barquero-Sanchez\orcidlink{0000-0001-7847-2938}, Jack Heimrath, Bernd Sing\orcidlink{0000-0001-7700-9976}, Nicolás Sirolli\orcidlink{0000-0002-0603-4784}, Caylee Spivey\orcidlink{0009-0000-3178-9188} and Michael Wijaya}

\address{Escuela de Matem\'atica, Universidad de Costa Rica, San Jos\'e 11501, Costa Rica}
\address{Centro de Investigación en Matemática Pura y Aplicada, Universidad de Costa Rica, San Jos\'e 2060, Costa Rica}
\email{adrian.barquero\_s@ucr.ac.cr}

\address{Department of Mathematics and Statistics, Queen’s University, Kingston, Ontario, Canada}
\email{23DK4@queensu.ca}

\address{Department of Mathematics, University of the West Indies, Cave Hill, P.O. Box 64, Bridgetown, BB11000, Barbados}
\email{bernd.sing@uwi.edu}

\address{Departamento de Matemática, FCEyN, UBA and IMAS, CONICET -- Pabellón I, Ciudad Universitaria, Ciudad Autónoma de Buenos Aires (1428), Argentina}
\email{nsirolli@dm.uba.ar}

\address{Department of Mathematics, University of Connecticut, Storrs, CT 06268, USA}
\email{caylee.spivey@uconn.edu}

\address{Department of Mathematics, Trinity School, New York, NY 10024, USA}
\email{michael.wijaya@trinityschoolnyc.org}

\subjclass{11N45, 11R29}
\keywords{ray class group, counting, ideal, k-free, distribution, arithmetic progression}

\begin{abstract}
In this paper, we extend the classical problem of studying the distribution of $k$-free integers in arithmetic progressions to the setting of arbitrary number fields. Using the language of ray class groups, we establish asymptotic formulas, together with error terms, for the number of $k$-free ideals of bounded norm lying in a given ray class. In particular, our results show that $k$-free ideals are equidistributed among ray classes. We also obtain improved error estimates in the cases of ideal class groups and narrow class groups by using sharper ideal counting asymptotics due to Landau.
Our results recover the classical formulas of Gegenbauer and Cohen--Robinson over $\mathbb{Q}$ and extend previous work of Benkoski, Nymann, and Sittinger to the setting of ray class groups. We also present explicit computational examples that illustrate the asymptotic formulas and the equidistribution of $k$-free ideals among ray classes.
\end{abstract}

\maketitle

\section{Introduction and statement of results}

\subsection{History: The distribution of \texorpdfstring{$k$}{k}-free integers}
An integer $n$ is said to be $k$-free if it is not divisible by the $k$-th power of any prime number. A classical problem in number theory concerns the study of the distribution of $k$-free integers under various conditions. The first known occurrence of this topic in general appears in a result of Gegenbauer \cite[pp.~46--47]{Gege85}, who proved in 1885 that, for $k \ge 2$,
\begin{align}\label{eqn:gegenbauer-formula}
\#\{ n \in \Z_{\ge 1} \suchthat \text{$n \le X$ and $n$ is $k$-free} \}
= \frac{1}{\zeta(k)} \cdot X + O(X^{1/k}),
\end{align}
where $\zeta(s)$ denotes the Riemann zeta function. In fact, Gegenbauer mentions that the special case $k = 2$ had already been derived by Dirichlet, Bugajef, and Cesàro \cite[p. 47]{Gege85}.  

Just as Dirichlet studied the distribution of primes in arithmetic progressions, another natural question that has been investigated is how $k$-free numbers are distributed among arithmetic progressions. The case of square-free numbers ($k = 2$) has been the most extensively analyzed. Early contributions in this direction were due to Prachar \cite{Pra58} and Erd\H{o}s \cite{Erd60}, who studied the problem of determining the least square-free number in a given arithmetic progression $a \pmod{q}$. 

A systematic investigation of the \emph{distribution} of square-free integers in arithmetic progressions began with the work of Landau \cite[pp. 633--636]{landau1909handbuch}, who obtained the asymptotic formula
\[
\#\{ n \in \Z_{\ge 1} \suchthat \text{$n \le X, \, n \equiv a \bmod q$ and $n$ is square-free} \}
= \frac{6}{q\pi^2}
\prod_{p\mid q} \left(1 - \tfrac{1}{p^2} \right)^{-1} \cdot X
+ O(X^{1/2})
\]
for $\gcd(a, q) = 1$. This formula shows, in particular, that square-free integers are equidistributed among arithmetic progressions.

Orr \cite{Orr71} obtained remainder estimates and mean-square estimates for the error terms in counting square-free integers in arithmetic progressions, averaged over residue classes and moduli $q$, in analogy with results of Barban, Bombieri, Davenport, Halberstam, and Gallagher for primes in arithmetic progressions, while Hooley \cite{Hoo75} sharpened Prachar’s error term for square-free integers in arithmetic progressions, obtaining uniform asymptotic estimates in a wider range of moduli under suitable hypotheses on $q$.  These works mark the classical foundation of the distribution theory of square-free numbers in arithmetic progressions.

The extension to general $k$-free integers ($k \ge 2$) follows a similar structure. Cohen and Robinson \cite{cohen1963distribution} showed that the same asymptotic form (note that $1/\zeta(2) = 6/\pi^2$)
\begin{equation}
\label{eqn:cohen}
\#\{ n \in \Z_{\ge 1} \suchthat \text{$n \le X, \, n \equiv a \bmod q$ and $n$ is $k$-free} \}
= \frac{1}{q\,\zeta(k)}
  \prod_{p\mid q}\left(1 - \frac{1}{p^k}\right)^{-1}
  \cdot X
  + O(X^{1/k})
\end{equation}
holds in this more general setting.
Subsequent works, such as those of Baker and Powell \cite{BP10}, Liu \cite{Liu14,Liu16}, and Mossinghoff, Oliveira e Silva, and Trudgian \cite{MOT21}, have refined both the theoretical framework and computational aspects of the distribution of $k$-free numbers.  

More recently, Nunes \cite{Nun15,Nun22}, Mangerel \cite{Man21}, and Gorodetsky, Matomäki, Radziwiłł, and Rodgers \cite{GMRR21} have investigated finer aspects of the variance and moments of the distribution of square-free and $k$-free integers in short intervals and arithmetic progressions. Their work goes beyond classical asymptotic formulas by studying variances, moments, and other statistical features of the error terms. In this way, the study of $k$-free integers in arithmetic progressions has become closely connected with contemporary developments in the analytic and probabilistic theory of multiplicative functions; see also \cite{montgomery-vaughan2007}.

\medskip

A related but distinct direction concerns simultaneous coprimality conditions for tuples. Mertens considered the probability that two positive integers are relatively prime. Lehmer \cite{lehmer1900asymptotic} extended this problem to $r$-tuples, showing that the probability that $r$ positive integers have greatest common divisor equal to one is $1/\zeta(r)$. More generally, an $r$-tuple $(n_1,\dots,n_r)$ of positive integers is said to be \emph{relatively $k$-free} if there is no prime number $p$ such that $p^k$ divides each of $n_1,\dots,n_r$. Benkoski \cite{benkoski1976probability} extended Lehmer's result by proving that, if $k \neq 1$ and $rk>2$, then
\begin{align}\label{eqn:benkoski-formula}
\#\left\{
(n_i)\in \Z_{\geq 1}^r
\suchthat
n_i \leq X \text{ for all } i
\text{ and } (n_i) \text{ is relatively $k$-free}
\right\} = 
\frac{1}{\zeta(kr)}X^r + O(X^{r-1}).
\end{align}
Nymann \cite{nymann1992distribution} studied the equidistribution of relatively $k$-free tuples among residue classes. 

The number-field analogue due to Sittinger \cite{Sit10}, which extends Benkoski's asymptotic formula to tuples of integral ideals, is especially important for the present paper and served as a key source of motivation for our work. The present paper continues this number-field perspective, but focuses on the distribution of $k$-free integral ideals in ray classes, thereby extending the classical study of $k$-free integers in arithmetic progressions to the setting of arbitrary number fields.

\subsection{Statement of results}

Let $K$ be a number field. We say that a nonzero integral ideal
$\mathfrak{a}\trianglelefteq\mathcal{O}_K$ is \emph{$k$-free} if
there is no nonzero prime ideal $\mathfrak{p}$ of $\mathcal{O}_K$
such that $\mathfrak{p}^k$ divides $\mathfrak{a}$.

In this paper, we study the distribution of $k$-free integral ideals
of $K$ in ray classes. Ray class groups provide the natural
number-field analogue of arithmetic progressions. This analogy is
made explicit in the paragraph preceding
Corollary~\ref{cor:gegenbauer-cohen-robinson} and in
Example~\ref{ex:ray-class-groups-over-Q}. For a modulus
$\mathfrak{m} = \mathfrak{m}_0 \mathfrak{m}_{\infty}$, we denote the ray class group modulo $\mathfrak{m}$
by $\mathrm{Cl}_K^{\mathfrak{m}}$. We refer the reader to
Section~\ref{section:ray-class-groups} for the relevant definitions
and basic properties of ray class groups and related objects.

For a ray class $\mathfrak{A}\in\mathrm{Cl}_K^{\mathfrak{m}}$ and
$X>0$, we write $\mathcal{I}_K^{\mathfrak{m}}$ for the set of all integral ideals of $K$ that are coprime to the modulus $\mathfrak{m}_0$ and define
\begin{equation}\label{eqn:ideals-in-ray-class-bounded-norm}
\mathcal{I}_K^{\mathfrak{m}}(X;\mathfrak{A})
:=
\left\{
\mathfrak{a}\in\mathcal{I}_K^{\mathfrak{m}}
\suchthat
\Nrm{K/\Q}{\mathfrak{a}}\leq X
\text{ and }
[\mathfrak{a}]=\mathfrak{A}
\right\}.
\end{equation}

In order to state our main theorem uniformly, we introduce a parameter
that distinguishes the cases in which a stronger ideal-counting
estimate is available. Let
\[
\mathfrak{m}_{\infty}^{\mathrm{full}}
:=
\prod_{\substack{v\mid\infty\\ v\text{ real}}} v
\]
denote the modulus whose finite part is trivial and whose infinite
part contains every real place of $K$ with exponent $1$. Define
\begin{equation}\label{eqn:delta-d-m}
\delta_d(\mathfrak{m})
:=
\begin{cases}
\displaystyle \frac{2}{d+1},
&
\text{if $\mathfrak{m}=(1)$ or
$\mathfrak{m}=\mathfrak{m}_{\infty}^{\mathrm{full}}$},
\\[8pt]
\displaystyle \frac{1}{d},
&
\text{otherwise}.
\end{cases}
\end{equation}
The modulus $\mathfrak{m}=(1)$ gives the ordinary ideal class group,
whereas $\mathfrak{m}=\mathfrak{m}_{\infty}^{\mathrm{full}}$ gives the narrow ideal
class group. Thus, $\delta_d(\mathfrak{m})=2/(d+1)$ precisely in
these two cases, while $\delta_d(\mathfrak{m})=1/d$ for the remaining
ray class groups considered here.

We can now state our main result, which counts relatively $k$-free
tuples of ideals lying in prescribed ray classes.

\begin{theorem}\label{thm:main-theorem}
Let $K$ be a number field of degree $[K:\Q]=d$, let $\mathfrak{m}$
be a modulus for $K$, and let $k,r\in\Z_{\geq1}$ satisfy $kr>1$.
Let $\boldsymbol{\mathfrak{A}} = (\mathfrak{A}_1,\ldots,\mathfrak{A}_r) \in \left(\mathrm{Cl}_K^{\mathfrak{m}}\right)^r$ be a fixed $r$-tuple of ray classes modulo $\mathfrak{m}$. Define
\begin{equation*}
Q_K^{k,r}(X;\boldsymbol{\mathfrak{A}})
:=
\#\left\{
(\mathfrak{a}_1,\ldots,\mathfrak{a}_r)
\in
\prod_{i=1}^{r}
\mathcal{I}_K^{\mathfrak{m}}(X;\mathfrak{A}_i)
\suchthat
\text{there is no prime ideal $\mathfrak{p}$ with }
\mathfrak{p}^k\mid\mathfrak{a}_i
\text{ for every $i$}
\right\}.
\end{equation*}

Then, for any $X>1$,
\begin{align}\label{eqn:main-asymptotic-formula}
Q_K^{k,r}(X;\boldsymbol{\mathfrak{A}})
&=
\frac{\left(\rho_K^{\mathfrak{m}}\right)^r}
{\zeta_K(rk)}
\prod_{\mathfrak{p}\mid\mathfrak{m}_0}
\left(
1-\frac{1}{\Nrm{K/\Q}{\mathfrak{p}}^{rk}}
\right)^{-1}
X^r
+
E_{K,\mathfrak{m},k,r}(X),
\end{align}
where $\rho_K^{\mathfrak{m}}$ is given by
\eqref{eqn:residue}, and
\begin{equation}\label{eqn:main-error-term}
E_{K,\mathfrak{m},k,r}(X)
=
\begin{cases}
\displaystyle
O\left(X^{1/k}\right),
&
\text{if } k\left(r-\delta_d(\mathfrak{m})\right)<1,
\\[8pt]
\displaystyle
O\left(
X^{r-\delta_d(\mathfrak{m})}\log X
\right),
&
\text{if } k\left(r-\delta_d(\mathfrak{m})\right)=1,
\\[8pt]
\displaystyle
O\left(
X^{r-\delta_d(\mathfrak{m})}
\right),
&
\text{if } k\left(r-\delta_d(\mathfrak{m})\right)>1.
\end{cases}
\end{equation}
\end{theorem}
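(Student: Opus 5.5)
We will argue by Möbius inversion over ideals, combined with the ray-class ideal counting asymptotic
\[
\#\mathcal{I}_K^{\mathfrak{m}}(X;\mathfrak{A}) = \rho_K^{\mathfrak{m}} X + O\bigl(X^{1-\delta_d(\mathfrak{m})}\bigr),
\]
which we take as the input stated earlier in the paper. It has exponent $1-1/d$ in general and Landau's $1-2/(d+1)$ when $\mathfrak{m}=(1)$ or $\mathfrak{m}=\mathfrak{m}_\infty^{\mathrm{full}}$. The condition that no $\mathfrak{p}^k$ divides every $\mathfrak{a}_i$ is detected by
\[
\sum_{\mathfrak{d}^k\mid \gcd(\mathfrak{a}_1,\dots,\mathfrak{a}_r)}\mu_K(\mathfrak{d}).
\]
Since each $\mathfrak{a}_i$ is coprime to $\mathfrak{m}_0$, only $\mathfrak{d}$ coprime to $\mathfrak{m}_0$ contribute. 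Interchanging sums gives
\[
Q_K^{k,r}(X;\boldsymbol{\mathfrak{A}})=\sum_{\substack{\mathfrak{d}\in\mathcal{I}_K^{\mathfrak{m}}\\ \Nrm{K/\Q}{\mathfrak{d}}\le X^{1/k}}}\mu_K(\mathfrak{d})\prod_{i=1}^r \#\bigl\{\mathfrak{a}_i\in\mathcal{I}_K^{\mathfrak{m}}(X;\mathfrak{A}_i): \mathfrak{d}^k\mid\mathfrak{a}_i\bigr\}.
\]

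For each factor we write $\mathfrak{a}_i=\mathfrak{d}^k\mathfrak{b}_i$. Because $\mathfrak{d}$ is coprime to $\mathfrak{m}_0$, the class $[\mathfrak{d}^k]$ is defined, and $\mathfrak{b}_i$ ranges over $\mathcal{I}_K^{\mathfrak{m}}\bigl(X/\Nrm{K/\Q}{\mathfrak{d}}^k;\ \mathfrak{A}_i[\mathfrak{d}]^{-k}\bigr)$. The ray-class count then applies uniformly in the class, since the implied constant depends only on $K$ and $\mathfrak{m}$. Writing $Y=X/N\mathfrak{d}^k$ with $Y\ge1$, each factor equals $\rho Y+O(Y^{1-\delta})$. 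This is also $O(Y)$, because the count is bounded by the number of ideals of norm at most $Y$. Expanding the product gives
\[
(\rho Y)^r + O\bigl(Y^{r-\delta}\bigr), \qquad Y\ge1 .
\]

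We sum the main term over all $\mathfrak{d}$ coprime to $\mathfrak{m}_0$. This gives
\[
\rho^r X^r\sum_{(\mathfrak{d},\mathfrak{m}_0)=1}\frac{\mu_K(\mathfrak{d})}{N\mathfrak{d}^{rk}}=\frac{\rho^r X^r}{\zeta_K(rk)}\prod_{\mathfrak{p}\mid\mathfrak{m}_0}\bigl(1-N\mathfrak{p}^{-rk}\bigr)^{-1}.
\]
The series converges absolutely because $rk>1$, and the Euler product identifies the constant. Completing the sum costs a tail
\[
X^r\sum_{N\mathfrak{d}>X^{1/k}}N\mathfrak{d}^{-rk}\ll X^r\,(X^{1/k})^{1-rk}=X^{1/k},
\]
using $\#\{\mathfrak{d}:N\mathfrak{d}\le t\}\ll t$ and partial summation.

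The error contribution is
\[
X^{r-\delta}\sum_{N\mathfrak{d}\le X^{1/k}}N\mathfrak{d}^{-k(r-\delta)}.
\]
By partial summation this is $\ll X^{r-\delta}$ when $k(r-\delta)>1$, $\ll X^{r-\delta}\log X$ when $k(r-\delta)=1$, and $\ll X^{r-\delta}X^{(1-k(r-\delta))/k}=X^{1/k}$ when $k(r-\delta)<1$. Comparing with the tail $X^{1/k}$, note that $r-\delta\ge 1/k$ exactly in the latter two cases, so $X^{1/k}$ is dominated there. This produces exactly the three cases of \eqref{eqn:main-error-term}.

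The main obstacle is the uniformity of the ray-class count in the class $\mathfrak{A}_i[\mathfrak{d}]^{-k}$ and for small $Y$. The class issue is harmless because the group is finite. For small $Y$ we need the bound $O(Y^{1-\delta})$ to absorb lower-order cross terms; we handle this with the trivial bound $\#\le CY$, which is consistent with $Y\ge1$. We must also check that the cases $\mathfrak{m}=(1)$ and $\mathfrak{m}=\mathfrak{m}_\infty^{\mathrm{full}}$ indeed have Landau's exponent $2/(d+1)$ uniformly over classes, which we take from the earlier section.
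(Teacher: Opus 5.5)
Your proposal is correct and follows essentially the same route as the paper. You derive the Möbius expansion from $\sum_{\mathfrak{d}^k\mid\gcd(\mathfrak{a}_1,\dots,\mathfrak{a}_r)}\mu_K(\mathfrak{d})$ rather than by inclusion--exclusion over primes, but this gives the same identity as Proposition~\ref{prop:Q-identity-with-moebius}, and the rest matches the paper step for step: the shift into the class $\mathfrak{A}_i[\mathfrak{d}]^{-k}$, the uniform expansion $(\rho Y)^r+O(Y^{r-\delta})$, completion of the M\"obius series with an $O(X^{1/k})$ tail, the three-case partial-summation bound for the error sum, and the final comparison of $X^{1/k}$ with $X^{r-\delta}$.
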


\begin{remark}
Observe that the main term in
\eqref{eqn:main-asymptotic-formula} is independent of the prescribed
tuple of ray classes
\[
\boldsymbol{\mathfrak{A}}
\in
\left(\Cl_K^{\mathfrak{m}}\right)^r.
\]
Thus, relatively $k$-free $r$-tuples of integral ideals are
equidistributed among the
$\left(h_K^{\mathfrak{m}}\right)^r$ possible $r$-tuples of ray
classes modulo $\mathfrak{m}$, where
\[
h_K^{\mathfrak{m}}
:=
\#\Cl_K^{\mathfrak{m}}.
\]
In particular, when $r=1$, the $k$-free integral ideals are
equidistributed among the ray classes modulo $\mathfrak{m}$, so each
ray class contains asymptotically a proportion
\[
\frac{1}{h_K^{\mathfrak{m}}}
\]
of all $k$-free ideals.
\end{remark}

The case $r=1$ of Theorem~\ref{thm:main-theorem} gives the
corresponding asymptotic formula for $k$-free ideals in a fixed ray
class. We record separately below the two most important
specializations: first for a general ray class group, where
$\delta_d(\mathfrak{m})=1/d$, and then for the ordinary and narrow
class groups, where the stronger value
$\delta_d(\mathfrak{m})=2/(d+1)$ is available.


\begin{theorem}\label{thm:k-free-ideals-in-ray-classes}
Let $K$ be a number field of degree $[K:\Q]=d$, let $\mathfrak{m}$
be a modulus for $K$, let $k\geq2$ be an integer, and let
$\mathfrak{A}\in\mathrm{Cl}_K^{\mathfrak{m}}$ be a fixed ray class
modulo $\mathfrak{m}$. Define
\begin{equation*}
Q_K^k(X;\mathfrak{A})
:=
\#\left\{
\mathfrak{a}\in
\mathcal{I}_K^{\mathfrak{m}}(X;\mathfrak{A})
\suchthat
\mathfrak{a}\text{ is $k$-free}
\right\}.
\end{equation*}
Then, for any $X>1$,
\begin{align}\label{eqn:k-free-ideals-ray-class-asymptotic}
Q_K^k(X;\mathfrak{A})
&=
\frac{\rho_K^{\mathfrak{m}}}{\zeta_K(k)}
\prod_{\mathfrak{p}\mid\mathfrak{m}_0}
\left(
1-\frac{1}{\Nrm{K/\Q}{\mathfrak{p}}^k}
\right)^{-1}
X
+
E_{K,\mathfrak{m},k}(X),
\end{align}
where $\rho_K^{\mathfrak{m}}$ is given by
\eqref{eqn:residue}.

For number fields of degree $d=2$, the error term satisfies
\[
E_{K,\mathfrak{m},k}(X)
=
\begin{cases}
O\left(X^{1/2}\log X\right),
&
\text{if } k=2,
\\[6pt]
O\left(X^{1/2}\right),
&
\text{if } k\geq3.
\end{cases}
\]

For number fields of degree $d\geq3$, we have
\[
E_{K,\mathfrak{m},k}(X)
=
O\left(X^{1-\frac1d}\right)
\qquad
\text{for every }k\geq2.
\]

Finally, for $K=\Q$, we have
\[
E_{\Q,\mathfrak{m},k}(X)
=
O\left(X^{1/k}\right).
\]
\end{theorem}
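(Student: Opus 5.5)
This theorem is the case $r=1$ of Theorem~\ref{thm:main-theorem}, so the plan is to specialize that result and then read off the error term in each degree regime. With $r=1$ the hypothesis $kr>1$ is exactly $k\geq 2$. Moreover, for $r=1$ the set counted by $Q_K^{1,k}$ is the set of $\mathfrak{a}\in\mathcal{I}_K^{\mathfrak{m}}(X;\mathfrak{A})$ for which no prime $\mathfrak{p}$ has $\mathfrak{p}^k\mid\mathfrak{a}$. This is precisely the set of $k$-free ideals counted by $Q_K^k(X;\mathfrak{A})$. Substituting $r=1$ into \eqref{eqn:main-asymptotic-formula} gives the main term in \eqref{eqn:k-free-ideals-ray-class-asymptotic}.

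For the error term, the relevant quantity is $k(1-\delta_d(\mathfrak{m}))$, which we compare with $1$.
\begin{itemize}
\item \emph{Degree $d=2$.} Here $\delta_2(\mathfrak{m})$ is either $1/2$ or $2/3$. If $\delta=1/2$, then $k(1-\delta)=k/2$. This equals $1$ when $k=2$, giving $O(X^{1/2}\log X)$. For $k\geq3$ it exceeds $1$, giving $O(X^{1/2})$. If $\delta=2/3$, then $k/3<1$ for $k=2$, giving $O(X^{1/2})$. For $k=3$ we get the boundary case $O(X^{1/3}\log X)$, and for $k\geq4$ we get $O(X^{1/3})$. Each of these is dominated by the uniform bound claimed in the statement.
\item \emph{Degree $d\geq3$.} If $\delta=1/d$, then $k(1-1/d)\geq 2\cdot 2/3>1$, giving $O(X^{1-1/d})$. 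If $\delta=2/(d+1)$, the resulting bound is $X^{1-2/(d+1)}$, up to a log factor, or $X^{1/k}$. Since $1-2/(d+1)<1-1/d$ and $1/k\leq 1/2\leq 1-1/d$, all cases are $O(X^{1-1/d})$. The boundary case with a $\log X$ factor is absorbed because the exponent is strictly smaller.
\item \emph{The field $K=\Q$.} Here $d=1$, and whichever value $\delta_1(\mathfrak{m})$ takes (either $1$ or $2/(1+1)=1$), we have $\delta=1$. Then $k(1-\delta)=0<1$, so the first case of \eqref{eqn:main-error-term} gives $O(X^{1/k})$.
\end{itemize}

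The only delicate point is the degree-$2$ class/narrow-class case with $k=3$. There the bound is $X^{1/3}\log X$, which is $O(X^{1/2})$, as the statement requires. Beyond that, the argument is bookkeeping of the three cases of \eqref{eqn:main-error-term}. Throughout, I would note that the smaller bounds available for $\mathfrak{m}=(1)$ or $\mathfrak{m}_\infty^{\mathrm{full}}$ are weakened to the uniform bounds stated.
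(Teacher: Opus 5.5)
Your proposal is correct and follows the paper's proof. Both specialize Theorem~\ref{thm:main-theorem} to $r=1$ and sort the error term by comparing $k(1-\delta_d(\mathfrak{m}))$ with $1$ in the cases $d=1$, $d=2$, $d\geq 3$. The paper simply sets $\delta_d(\mathfrak{m})=1/d$ throughout, whereas you also handle the moduli $(1)$ and $\mathfrak{m}_\infty^{\mathrm{full}}$, where Theorem~\ref{thm:main-theorem} as stated gives $\delta_d(\mathfrak{m})=2/(d+1)$, and you check that those sharper bounds are dominated by the claimed ones; this is a slightly more careful use of the main theorem as stated.
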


Theorem \ref{thm:k-free-ideals-in-ray-classes} recovers, as a special case, the classical formulas for the distribution of $k$-free integers \eqref{eqn:gegenbauer-formula} and for $k$-free integers in arithmetic progressions \eqref{eqn:cohen}. Indeed, when $K=\mathbb{Q}$ and $q \in \Z_{\geq 1}$, if we take the modulus $\mathfrak{m} = \mathfrak{m}_0 \mathfrak{m}_{\infty}$ with finite part $\mathfrak{m}_0=(q)$ and infinite part  $\mathfrak{m}_{\infty} = \infty$,
the ray class group $\mathrm{Cl}_{\mathbb{Q}}^{\mathfrak{m}}$ is canonically isomorphic to $(\mathbb{Z}/q\mathbb{Z})^{\times}$ (see e.g. \cite[p. 127]{Lan94}, and for details also Example~\ref{ex:ray-class-groups-over-Q} below). Explicitly, if a fractional ideal of $\mathbb{Q}$ prime to $q$ is written as $(a/b)$ with $a, b \in \Z_{\geq 1}$, $\gcd(a, b) = 1$, and $\gcd(ab,q)=1$, then its ray class $[(a/b)]$ maps to
$$
[(a/b)]\longmapsto a b^{-1} \pmod q.
$$
In particular, the class of the integral ideal $(n)$ with $(n,q)=1$ maps to the reduced residue class of $n$ modulo $q$. Hence ray classes modulo $\mathfrak{m}$ recover precisely the reduced arithmetic progressions modulo $q$. Moreover, the $k$-free integral ideals of $\mathbb{Q}$ are exactly the ideals $(n)$ with $n$ a $k$-free positive integer. Therefore Theorem~\ref{thm:k-free-ideals-in-ray-classes} immediately yields the classical results from Gegenbauer and Cohen-Robinson as a corollary.

\begin{corollary}\label{cor:gegenbauer-cohen-robinson}
Let $k \ge 2$.
\begin{enumerate}
\item The number of $k$-free integers $n \le X$ satisfies
\[
\#\{  n \in \Z_{\ge 1} \suchthat \text{$n \le X$ and $n$ is $k$-free} \}
= \frac{1}{\zeta(k)} \cdot X + O(X^{1/k}).
\]

\item Let $q \ge 1$ and $(a,q)=1$. Then

\begin{equation*}
\#\{ n \in \Z_{\ge 1} \suchthat \text{$n \le X, \, n \equiv a \bmod q$ and $n$ is $k$-free} \}
= \frac{1}{q\,\zeta(k)}
  \prod_{p\mid q}\left(1 - \tfrac{1}{p^k}\right)^{-1}
  \cdot X
  + O(X^{1/k})
\end{equation*}
\end{enumerate}
\end{corollary}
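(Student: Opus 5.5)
The plan is to specialize Theorem~\ref{thm:k-free-ideals-in-ray-classes} to $K=\Q$, with two choices of modulus, and then translate ideals into positive integers. For $K=\Q$ every nonzero integral ideal is $(n)$ for a unique $n\in\Z_{\geq1}$, with $\Nrm{\Q/\Q}{(n)}=n$. The prime ideals are the $(p)$, so $(n)$ is $k$-free exactly when $n$ is $k$-free. Also $\zeta_{\Q}=\zeta$. The error term $O(X^{1/k})$ is exactly the one the theorem gives for $K=\Q$, so it only remains to compute the main-term constants $\rho_{\Q}^{\mathfrak m}$ from \eqref{eqn:residue}.

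For part (1), take $\mathfrak m=(1)$. Then $\mathcal I_{\Q}^{\mathfrak m}$ consists of all integral ideals, and $\Cl_{\Q}^{(1)}$ is trivial because $\Z$ is a PID. The single class $\mathfrak A$ therefore contains every $(n)$, and $Q_{\Q}^k(X;\mathfrak A)$ is precisely the count of $k$-free $n\le X$. The product over $\mathfrak p\mid\mathfrak m_0$ is empty. The constant $\rho_{\Q}^{(1)}$ should be the residue of $\zeta$ at $s=1$ divided by the class number, which equals $1$. I will check this directly from the formula \eqref{eqn:residue}: $h=1$, $R=1$, $w=2$, $r_1=1$, $D=1$. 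The same value also follows from the trivial count $\lfloor X\rfloor = X+O(1)$. This gives the main term $X/\zeta(k)$.

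For part (2), take $\mathfrak m=(q)\infty$. By the isomorphism $\Cl_{\Q}^{\mathfrak m}\cong(\Z/q\Z)^\times$ described before the corollary, the class of $(n)$ with $\gcd(n,q)=1$ corresponds to $n \bmod q$. Let $\mathfrak A$ be the class corresponding to $a$. Then $\mathcal I_{\Q}^{\mathfrak m}(X;\mathfrak A)$ is exactly $\{(n): n\le X,\ n\equiv a \bmod q\}$. The coprimality condition is automatic because $\gcd(a,q)=1$. Hence $Q_{\Q}^k(X;\mathfrak A)$ equals the left-hand side of (2).

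The step needing care is evaluating $\rho_{\Q}^{\mathfrak m}$. By the general formula (a residue times $\varphi$-type factors over the ray class number), I expect
\[
\rho_{\Q}^{\mathfrak m}=\frac{\varphi(q)}{q}\cdot\frac{1}{\#(\Z/q\Z)^\times}=\frac1q .
\]
As a sanity check, the number of $n\le X$ with $n\equiv a \bmod q$ is $X/q+O(1)$. Substituting this value, together with $\prod_{p\mid q}(1-p^{-k})^{-1}$ coming from the primes $\mathfrak p=(p)\mid(q)$, gives the stated formula.

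The only real obstacle is confirming that the definition in \eqref{eqn:residue} produces $1/q$ for this modulus, in particular that the infinite place is handled correctly. I will do this either by plugging into the formula or by appealing to the elementary count above, since $\rho$ is defined as the density of ideals in a ray class.
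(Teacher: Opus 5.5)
Your proposal is correct and follows the paper's own route: specialize Theorem~\ref{thm:k-free-ideals-in-ray-classes} to $K=\Q$ with $\mathfrak{m}=(q)\infty$, use $\Cl_{\Q}^{\mathfrak{m}}\cong(\Z/q\Z)^{\times}$ and the identification of $k$-free ideals $(n)$ with $k$-free integers $n$, and insert $\rho_{\Q}^{\mathfrak{m}}=1/q$; taking $\mathfrak{m}=(1)$ for part (1) instead of $q=1$ makes no difference. The check of \eqref{eqn:residue} that you left pending does go through: $\Z^{\times}\cap\Q^{\mathfrak{m},1}=\{1\}$ because $-1$ fails positivity at $\infty$, so $w_{\mathfrak{m}}=\operatorname{Reg}_{\mathfrak{m}}=1$ and $\rho_{\Q}^{\mathfrak{m}}=2/(2q)=1/q$.
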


The preceding specialization uses the general ideal-counting estimate
for ray classes and therefore applies to an arbitrary modulus
$\mathfrak{m}$. For the ordinary ideal class group $\Cl_K$ and the
narrow class group $\Cl_K^+$, however, a sharper ideal-counting
estimate is available. These correspond, respectively, to the trivial
modulus $\mathfrak{m}=(1)$ and to the modulus
$\mathfrak{m}=\mathfrak{m}_{\infty}^{\mathrm{full}}$ whose finite part is trivial and
whose infinite part consists of all real places of $K$. In these two
cases,
\[
\delta_d(\mathfrak{m})=\frac{2}{d+1},
\]
and the case $r=1$ of Theorem~\ref{thm:main-theorem} gives the
following improved estimates.

\begin{theorem}\label{thm:k-free-ideals-class-and-narrow-class-groups}
Let $K$ be a number field of degree $d\geq2$, and let $k\geq2$ be an
integer.

If $\mathfrak{C}\in\Cl_K$ is an ideal class, define
\[
Q_K^k(X;\mathfrak{C})
:=
\#\left\{
\mathfrak{a}\trianglelefteq\mathcal{O}_K
\suchthat
\Nrm{K/\Q}{\mathfrak{a}}\leq X,\,
\mathfrak{a}\in\mathfrak{C},\,
\mathfrak{a}\text{ is $k$-free}
\right\}.
\]
Then
\[
Q_K^k(X;\mathfrak{C})
=
\frac{\rho_K}{h_K\zeta_K(k)}X
+
E_{K,k}(X;\mathfrak{C}).
\]

Similarly, if $\mathfrak{C}^+\in\Cl_K^+$ is a narrow ideal class,
define
\[
Q_K^k(X;\mathfrak{C}^+)
:=
\#\left\{
\mathfrak{a}\trianglelefteq\mathcal{O}_K
\suchthat
\Nrm{K/\Q}{\mathfrak{a}}\leq X,\,
\mathfrak{a}\in\mathfrak{C}^+,\,
\mathfrak{a}\text{ is $k$-free}
\right\}.
\]
Then
\[
Q_K^k(X;\mathfrak{C}^+)
=
\frac{\rho_K}{h_K^+\zeta_K(k)}X
+
E_{K,k}^+(X;\mathfrak{C}^+).
\]
In what follows, $E(X)$ denotes either $E_{K,k}(X;\mathfrak{C})$ or
$E_{K,k}^+(X;\mathfrak{C}^+)$, as appropriate. Then $E(X)$ satisfies the following bounds. 

\noindent For number fields of degree $d=2$,
\[
E(X)
=
\begin{cases}
O\left(X^{1/2}\right),
& \text{if } k=2,\\[6pt]
O\left(X^{1/3}\log X\right),
& \text{if } k=3,\\[6pt]
O\left(X^{1/3}\right),
& \text{if } k\geq4.
\end{cases}
\]
For number fields of degree $d=3$,
\[
E(X)
=
\begin{cases}
O\left(X^{1/2}\log X\right),
& \text{if } k=2,\\[6pt]
O\left(X^{1/2}\right),
& \text{if } k\geq3.
\end{cases}
\]
Finally, for number fields of degree $d\geq4$,
\[
E(X)
=
O\left(X^{1-\frac{2}{d+1}}\right)
\qquad
\text{for every }k\geq2.
\]
Here $h_K$ and $h_K^+$ denote the class number and narrow class number
of $K$, respectively, and
\[
\rho_K
=
\underset{s=1}{\operatorname{Res}}\,\zeta_K(s)
\]
is the residue of the Dedekind zeta function $\zeta_K(s)$ at $s=1$.
\end{theorem}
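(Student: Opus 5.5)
The plan is to deduce Theorem~\ref{thm:k-free-ideals-class-and-narrow-class-groups} directly from Theorem~\ref{thm:main-theorem} with $r=1$, in the two moduli where $\delta_d(\mathfrak{m})=2/(d+1)$. The case analysis of the error term then reduces to comparing $k(1-\tfrac{2}{d+1})=k\tfrac{d-1}{d+1}$ with $1$.

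\emph{Identifying the objects.} For $\mathfrak{m}=(1)$ we have $\mathfrak{m}_0=(1)$. Hence $\mathcal{I}_K^{\mathfrak{m}}$ consists of all nonzero integral ideals, $\mathrm{Cl}_K^{\mathfrak{m}}=\Cl_K$, and the product over $\mathfrak{p}\mid\mathfrak{m}_0$ is empty. So $Q_K^{k,1}(X;\mathfrak{C})=Q_K^k(X;\mathfrak{C})$, since for $r=1$ relative $k$-freeness is just $k$-freeness. For $\mathfrak{m}=\mathfrak{m}_\infty^{\mathrm{full}}$ the finite part is again trivial and $\mathrm{Cl}_K^{\mathfrak{m}}=\Cl_K^+$.

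It remains to check, from the definition \eqref{eqn:residue}, that $\rho_K^{\mathfrak{m}}$ specializes to $\rho_K/h_K$, respectively $\rho_K/h_K^+$. The expected reason is that $\rho_K^{\mathfrak{m}}$ is the density of integral ideals in a single ray class. The total density of integral ideals coprime to $\mathfrak{m}_0$ is $\rho_K\prod_{\mathfrak{p}\mid\mathfrak{m}_0}(1-\Nrm{K/\Q}{\mathfrak{p}}^{-1})$, and this is spread equally over the $h_K^{\mathfrak{m}}$ classes. With trivial $\mathfrak{m}_0$ this gives $\rho_K/h_K^{\mathfrak{m}}$. I would confirm this against the definition \eqref{eqn:residue} in Section~\ref{section:ray-class-groups}; this is the only point needing care, and it is bookkeeping.

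\emph{Error term.} Set $\delta=2/(d+1)$, so $1-\delta=\tfrac{d-1}{d+1}$, and apply \eqref{eqn:main-error-term} with $r=1$.
\begin{itemize}
\item For $d=2$: $1-\delta=1/3$. Then $k/3<1$ at $k=2$ gives $O(X^{1/2})$; $k=3$ gives equality and hence $O(X^{1/3}\log X)$; $k\ge4$ gives $O(X^{1/3})$.
\item For $d=3$: $1-\delta=1/2$. Then $k=2$ is the equality case, giving $O(X^{1/2}\log X)$, and $k\ge3$ gives $O(X^{1/2})$.
\item For $d\ge4$: $k\tfrac{d-1}{d+1}\ge \tfrac{2(d-1)}{d+1}>1$ as soon as $d>3$, so every $k\ge2$ falls in the third case and gives $O(X^{(d-1)/(d+1)})=O(X^{1-2/(d+1)})$.
\end{itemize}
These are exactly the stated bounds.

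No real obstacle is expected, since all analytic input (Landau's sharper count for classes in $\Cl_K$ and $\Cl_K^+$) is already absorbed into Theorem~\ref{thm:main-theorem}. The only subtle step is the residue identification above, together with making sure that the narrow class group really corresponds to the modulus $\mathfrak{m}_\infty^{\mathrm{full}}$ in the convention of Section~\ref{section:ray-class-groups}.
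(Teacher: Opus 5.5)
Your proposal is correct and follows the paper's proof essentially verbatim. Like the paper, you specialize Theorem~\ref{thm:main-theorem} to $r=1$ with $\delta_d(\mathfrak{m})=2/(d+1)$, run the same case analysis on $k(d-1)/(d+1)$ versus $1$, and substitute $\rho_K^{\mathfrak{m}}=\rho_K/h_K$, respectively $\rho_K/h_K^+$. The residue identification you flag is immediate from the paper's remark that $\operatorname{Res}_{s=1}\zeta_K(s;\mathfrak{m})=h_K^{\mathfrak{m}}\rho_K^{\mathfrak{m}}$, together with $\zeta_K(s;\mathfrak{m})=\zeta_K(s)$ when $\mathfrak{m}_0=(1)$.
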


Summing the first formula in
Theorem~\ref{thm:k-free-ideals-class-and-narrow-class-groups} over all ideal
classes gives the number field analogue of Gegenbauer's formula
\eqref{eqn:gegenbauer-formula}.

\begin{corollary}\label{cor:gegenbauer-number-field}
Let $K$ be a number field of degree $d\geq2$, let $k\geq2$ be an integer, and let
\[
Q_K^k(X)
:=
\#\left\{
\mathfrak{a}\trianglelefteq\mathcal{O}_K
\suchthat
\Nrm{K/\Q}{\mathfrak{a}}\leq X
\text{ and }
\mathfrak{a}\text{ is $k$-free}
\right\}
\]
denote the number of $k$-free integral ideals of $K$ of norm at most $X$. Then,
for any $X\geq2$,
\[
Q_K^k(X)
=
\frac{\rho_K}{\zeta_K(k)}X
+
E_{K,k}(X),
\]
where
\[
E_{K,k}(X)
=
\begin{cases}
O\left(X^{1/2}\right), & \text{if } d=2 \text{ and } k=2,\\[4pt]
O\left(X^{1/3}\log X\right), & \text{if } d=2 \text{ and } k=3,\\[4pt]
O\left(X^{1/3}\right), & \text{if } d=2 \text{ and } k\geq4,\\[4pt]
O\left(X^{1/2}\log X\right), & \text{if } d=3 \text{ and } k=2,\\[4pt]
O\left(X^{1/2}\right), & \text{if } d=3 \text{ and } k\geq3,\\[4pt]
O\left(X^{1-\frac{2}{d+1}}\right), & \text{if } d\geq4.
\end{cases}
\]
\end{corollary}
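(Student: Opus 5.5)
The plan is to obtain Corollary~\ref{cor:gegenbauer-number-field} by summing the first formula of Theorem~\ref{thm:k-free-ideals-class-and-narrow-class-groups} over the ideal class group, taking that theorem as given. Every nonzero integral ideal $\mathfrak{a}\trianglelefteq\mathcal{O}_K$ lies in exactly one ideal class $\mathfrak{C}\in\Cl_K$. The $k$-free condition and the norm bound do not depend on the class. Hence we have the disjoint decomposition
\[
Q_K^k(X)=\sum_{\mathfrak{C}\in\Cl_K} Q_K^k(X;\mathfrak{C}).
\]
Here $\mathfrak{m}=(1)$, so $\mathcal{I}_K^{\mathfrak{m}}$ consists of all nonzero integral ideals and there is no coprimality restriction. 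This is why no Euler factor at primes dividing $\mathfrak{m}_0$ appears.

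Next, I insert the formula
\[
Q_K^k(X;\mathfrak{C})=\frac{\rho_K}{h_K\zeta_K(k)}X+E_{K,k}(X;\mathfrak{C}).
\]
There are exactly $h_K$ classes, so the main terms add to $\rho_K X/\zeta_K(k)$. The total error is
\[
E_{K,k}(X)=\sum_{\mathfrak{C}} E_{K,k}(X;\mathfrak{C}).
\]

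The only point needing care is uniformity. The class group is finite, with $h_K$ depending only on $K$. Each $E_{K,k}(X;\mathfrak{C})$ obeys the bound stated in Theorem~\ref{thm:k-free-ideals-class-and-narrow-class-groups}, with an implied constant depending on $K$, $k$, and possibly $\mathfrak{C}$. Taking the maximum of these finitely many constants and multiplying by $h_K$ gives an error of the same order, with a constant depending only on $K$ and $k$.

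The case list then transcribes directly:
\begin{itemize}
\item for $d=2$, the errors are $X^{1/2}$, $X^{1/3}\log X$ and $X^{1/3}$;
\item for $d=3$, they are $X^{1/2}\log X$ and $X^{1/2}$;
\item for $d\ge4$, the error is $X^{1-2/(d+1)}$.
\end{itemize}
These are exactly the rows of the corollary.

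The restriction $X\ge2$ only ensures that $\log X$ is bounded away from zero in the logarithmic cases. On a bounded range the error terms are trivially absorbed, so nothing is lost.

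There is no substantive obstacle here. The only step deserving explicit mention is the uniformity of the implied constants over the finitely many classes, which is immediate from finiteness of $\Cl_K$.
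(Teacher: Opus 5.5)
Your proposal is correct and follows the paper's own argument. Like the paper, you write $Q_K^k(X)$ as the sum of $Q_K^k(X;\mathfrak{C})$ over the $h_K$ ideal classes, add the common main terms $\rho_K X/(h_K\zeta_K(k))$, and observe that summing finitely many error estimates only multiplies the implied constant by $h_K$. Your extra remarks are sound refinements: choosing implied constants uniformly over classes, and using $X\geq 2$ so that $\log X$ stays bounded away from zero in the logarithmic cases.
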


\begin{remark}
For $K=\Q$ one has $\rho_{\Q}=1$ and $\zeta_{\Q}=\zeta$, and
Corollary~\ref{cor:gegenbauer-cohen-robinson}~(i) gives the corresponding
statement with error $O(X^{1/k})$, recovering Gegenbauer's formula
\eqref{eqn:gegenbauer-formula}.
\end{remark}

\subsection{Computational examples}

In this subsection, we present explicit computational examples illustrating the asymptotic formulas in Theorems \ref{thm:k-free-ideals-in-ray-classes} and \ref{thm:k-free-ideals-class-and-narrow-class-groups}. We implemented functions in \texttt{SageMath} \cite{SageMathCoCalc} to compute directly the values of the counting function $Q_K^{k}(X;\mathfrak{A})$ introduced in Theorem~\ref{thm:main-theorem}, namely the number of $k$-free integral ideals of $\mathcal{O}_K$ with norm at most $X$ lying in a fixed ray class $\mathfrak{A}$. We also implemented a function that computes the corresponding main term in the asymptotic formula \eqref{eqn:main-asymptotic-formula}. The code used for these computations is available in the \texttt{GitHub} repository \cite{k-free-ideals-code}.

We start by giving an example of Theorem \ref{thm:k-free-ideals-class-and-narrow-class-groups}.

\begin{example}
Consider the cubic field $K=\Q(\sqrt[3]{7})$. The class number of $K$ is $h_K=3$. Thus there are three ideal classes in the class group  $\Cl_K$ and they are given by
\[
\mathfrak{A}_1=[\mathcal{O}_K],
\qquad
\mathfrak{A}_2=[\langle2,\sqrt[3]{7}+1 \rangle],
\qquad
\mathfrak{A}_3=[\langle 3,\sqrt[3]{7}-1 \rangle].
\]
Hence, for each $i=1,2,3$, the quantity $Q_K^{k}(X;\mathfrak{A}_i)$
counts the number of $k$-free integral ideals $\mathfrak{a}\trianglelefteq \mathcal{O}_K$ with $\Nrm{K/\Q}{\mathfrak{a}}\leq X$ and whose ideal class is $\mathfrak{A}_i$.
In this example the field has signature $(r_1, r_2) = (1, 1)$, and discriminant $d_K=-1323$. The unit rank of $K$ is $r_K = r_1 + r_2 -1 = 1$, and a fundamental unit of $K=\Q(\sqrt[3]{7})$ is $\varepsilon=2-\sqrt[3]{7}$.

Hence
\[
\operatorname{Reg}_K
=
|\log|\varepsilon||
=
-\log(2-\sqrt[3]{7})
=
\log\left(\frac{1}{2-\sqrt[3]{7}}\right).
\]
Equivalently, since
\[
\frac{1}{2-\sqrt[3]{7}}
=
4+2\sqrt[3]{7}+\sqrt[3]{49},
\]
we have
\[
\operatorname{Reg}_K
=
\log\left(4+2\sqrt[3]{7}+\sqrt[3]{49}\right).
\]
Therefore, using $r_1=1$, $r_2=1$, $w_K=2$, $h_K=3$, and $d_K=-1323$, the analytic class number formula gives
\[
\frac{\rho_K}{h_K}
=
\frac{2^{r_1}(2\pi)^{r_2}\operatorname{Reg}_K}
{w_K |d_K|^{1/2}}
=
\frac{2\pi}{\sqrt{1323}}
\log\left(4+2\sqrt[3]{7}+\sqrt[3]{49}\right).
\]
Hence, for every ideal class $\mathfrak{C}\in \Cl_K$ the asymptotic formula becomes
\[
Q_K^k(X;\mathfrak{C})
=
\frac{2\pi}{\sqrt{1323}\,\zeta_K(k)}
\log\left(4+2\sqrt[3]{7}+\sqrt[3]{49}\right)X
+
E_{K,k}(X;\mathfrak{C}).
\]
Moreover, the error term satisfies
\[
E_{K,k}(X;\mathfrak{C}) = 
\begin{cases}
O\left(X^{1/2}\log X\right),
& \text{if } k=2,\\[6pt]
O\left(X^{1/2}\right),
& \text{if } k\geq3.
\end{cases}
\]

In Table \ref{table:cubic-field-class-group-squarefree} we compare the exact counts in each of the three ideal classes with the common main term.

\begin{table}[H]
\centering
\renewcommand{\arraystretch}{1.3}
\begin{tabular}{c c c c c}
\toprule
$X$
&
$Q_K^{2}(X;\mathfrak{A}_1)$
&
$Q_K^{2}(X;\mathfrak{A}_2)$
&
$Q_K^{2}(X;\mathfrak{A}_3)$
&
$\dfrac{\rho_K}{3}\dfrac{X}{\zeta_K(2)}$
\\
\midrule
10000  & 2382  & 2423  & 2413  & 2407.27 \\
20000  & 4825  & 4809  & 4832  & 4814.54 \\
30000  & 7242  & 7194  & 7214  & 7221.80 \\
40000  & 9657  & 9606  & 9626  & 9629.07 \\
50000  & 12051 & 12037 & 12016 & 12036.34 \\
60000  & 14475 & 14434 & 14459 & 14443.61 \\
70000  & 16854 & 16879 & 16884 & 16850.87 \\
80000  & 19267 & 19259 & 19225 & 19258.14 \\
90000  & 21677 & 21710 & 21670 & 21665.41 \\
100000 & 24064 & 24111 & 24090 & 24072.68 \\
\bottomrule\\
\end{tabular}
\caption{Distribution of squarefree ideal counts across the three ideal classes in $\Cl_K$ for $K=\Q(\sqrt[3]{7})$ and comparison with the common main term in the asymptotic formula from Theorem \ref{thm:k-free-ideals-class-and-narrow-class-groups}.}
\label{table:cubic-field-class-group-squarefree}
\end{table}

\end{example}

We now give an example of Theorem \ref{thm:k-free-ideals-in-ray-classes} with a nontrivial modulus.

\begin{example}\label{example:nontrivial-ray-class-example}
Consider the real quadratic field $K=\Q(\sqrt{5})$. We take the modulus
$\mathfrak{m}=\mathfrak{m}_0\mathfrak{m}_{\infty}$, where
$\mathfrak{m}_0=\langle 4\rangle=\langle 2\rangle^2$ and
$\mathfrak{m}_{\infty}=\infty_1\infty_2$, so that the infinite part contains both infinite places of $K$.
By the ray class number formula \eqref{eqn:ray-class-number-formula}, we have
\[
h_K^{\mathfrak{m}}
=
\frac{\varphi(\mathfrak{m}_0)2^{\#\mathfrak{m}_{\infty}}h_K}
{\left[\mathcal{O}_K^{\times}:\mathcal{O}_K^{\times}\cap K^{\mathfrak{m},1}\right]}.
\]
In this case, $h_K=1$, $\varphi(\mathfrak{m}_0)=12$, $\#\mathfrak{m}_{\infty}=2$, and
\[
\left[\mathcal{O}_K^{\times}:\mathcal{O}_K^{\times}\cap K^{\mathfrak{m},1}\right]=12.
\]
Therefore $h_K^{\mathfrak{m}}=4$, in agreement with the direct computation using our \texttt{SageMath} implementation.

Thus the ray class group $\Cl_K^{\mathfrak{m}}$ has four elements. Representatives for these ray classes are given by
\[
\mathfrak{A}_1=[\mathcal{O}_K],
\quad
\mathfrak{A}_2=
\left[
\left\langle
\frac{1+3\sqrt{5}}{2}
\right\rangle
\right],
\quad
\mathfrak{A}_3=
[\langle -\sqrt{5}\rangle],
\quad
\mathfrak{A}_4=
\left[
\left\langle
\frac{-1+3\sqrt{5}}{2}
\right\rangle
\right].
\]

We next compute the residue
\[
\rho_K^{\mathfrak{m}}
=
\operatorname*{Res}_{s=1}\zeta_K(s;\mathfrak{A}_i)
\]
appearing in the main term of equation \eqref{eqn:main-asymptotic-formula}. Using \eqref{eqn:residue}, we obtain
\[
\rho_K^{\mathfrak{m}}
=
\frac{
2^{r_1}(2\pi)^{r_2}\operatorname{Reg}_{\mathfrak{m}}
}{
w_{\mathfrak{m}}|d_K|^{1/2}2^{\#\mathfrak{m}_{\infty}}\Nrm{K/\Q}{\mathfrak{m}_0}
}
=
\frac{3}{8\sqrt{5}}
\log\left(\frac{1+\sqrt{5}}{2}\right).
\]
Here we have used that $r_1=2$, $r_2=0$, $d_K=5$,
$\Nrm{K/\Q}{\mathfrak{m}_0}=16$, and
\[
\operatorname{Reg}_{\mathfrak{m}}
=
\frac{w_{\mathfrak{m}}}{w_K}
\left[\mathcal{O}_K^{\times}:\mathcal{O}_K^{\times}\cap K^{\mathfrak{m},1}\right]
\operatorname{Reg}_K.
\]
In the present example, $w_{\mathfrak{m}}=1$, $w_K=2$, and
\[
\operatorname{Reg}_K
=
\log\left(\frac{1+\sqrt{5}}{2}\right).
\]

Since the only prime ideal dividing $\mathfrak{m}_0$ is $\langle 2\rangle$, and since
$\Nrm{K/\Q}{\langle 2\rangle}=4$, we have
\[
\prod_{\mathfrak{p}\mid \mathfrak{m}_0}
\left(
1-\frac{1}{\Nrm{K/\Q}{\mathfrak{p}}^k}
\right)^{-1}
=
\left(1-\frac{1}{4^k}\right)^{-1}.
\]
Therefore, for each ray class $\mathfrak{A}_i$, the asymptotic formula
\eqref{eqn:main-asymptotic-formula} becomes
\[
Q_K^k(X;\mathfrak{A}_i)
=
\frac{3}{8\sqrt{5}}
\log\left(\frac{1+\sqrt{5}}{2}\right)
\left(1-\frac{1}{4^k}\right)^{-1}
\frac{X}{\zeta_K(k)}
+
E_{K,\mathfrak{m},k}(X).
\]
Since $K$ is quadratic, the error term is
\[
E_{K,\mathfrak{m},k}(X)
=
\begin{cases}
O\left(X^{1/2}\log X\right),
&
\text{if } k=2,
\\[6pt]
O\left(X^{1/2}\right),
&
\text{if } k\geq3.
\end{cases}
\]

In Tables~\ref{table:nontrivial-ray-class-example1-squarefree},
\ref{table:nontrivial-ray-class-example1-cubefree}, and
\ref{table:nontrivial-ray-class-example1-fourthfree}, we present computations for
$k=2,3,4$. We also plot these functions in Figure~\ref{fig:plots}. For convenience, we write
\[
C_K(\mathfrak{m},k)
:=
\frac{3}{8\sqrt{5}}
\log\left(\frac{1+\sqrt{5}}{2}\right)
\left(1-\frac{1}{4^k}\right)^{-1},
\]
so that the predicted main term for each ray class is
\[
C_K(\mathfrak{m},k)\frac{X}{\zeta_K(k)}.
\]

\begin{table}[h!]
\centering
\renewcommand{\arraystretch}{1.4}
\begin{tabular}{c c c c c c}
\toprule
$X$ & $Q_K^{2}(X; \mathfrak{A}_1)$ & $Q_K^{2}(X; \mathfrak{A}_2)$ & $Q_K^{2}(X; \mathfrak{A}_3)$ & $Q_K^{2}(X; \mathfrak{A}_4)$ & $C_K(\mathfrak{m}, 2) \dfrac{X}{\zeta_K(2)}$ \\
\midrule
100   & 6   & 9   & 7   & 9   & 7.41 \\
500   & 32  & 40  & 37  & 40  & 37.05 \\
1000  & 66  & 78  & 74  & 78  & 74.10 \\
5000  & 362 & 374 & 371 & 374 & 370.51 \\
10000 & 739 & 748 & 733 & 748 & 741.02 \\
\bottomrule\\
\end{tabular}
\caption{Distribution of counts across ray classes for $k=2$ and comparison with the main term.}
\label{table:nontrivial-ray-class-example1-squarefree}
\end{table}

\begin{table}[h!]
\centering
\renewcommand{\arraystretch}{1.4}
\begin{tabular}{c c c c c c}
\toprule
$X$ & $Q_K^{3}(X; \mathfrak{A}_1)$ & $Q_K^{3}(X; \mathfrak{A}_2)$ & $Q_K^{3}(X; \mathfrak{A}_3)$ & $Q_K^{3}(X; \mathfrak{A}_4)$ & $C_K(\mathfrak{m}, 3) \dfrac{X}{\zeta_K(3)}$ \\
\midrule
100   & 8   & 9   & 7   & 9   & 7.98 \\
500   & 38  & 42  & 39  & 42  & 39.89 \\
1000  & 78  & 82  & 78  & 82  & 79.78 \\
5000  & 397 & 400 & 404 & 400 & 398.92 \\
10000 & 801 & 798 & 796 & 798 & 797.85 \\
\bottomrule\\
\end{tabular}
\caption{Distribution of counts across ray classes for $k=3$ and comparison with the main term.}
\label{table:nontrivial-ray-class-example1-cubefree}
\end{table}

\begin{table}[h!]
\centering
\renewcommand{\arraystretch}{1.4}
\begin{tabular}{c c c c c c}
\toprule
$X$ & $Q_K^{4}(X; \mathfrak{A}_1)$ & $Q_K^{4}(X; \mathfrak{A}_2)$ & $Q_K^{4}(X; \mathfrak{A}_3)$ & $Q_K^{4}(X; \mathfrak{A}_4)$ & $C_K(\mathfrak{m}, 4) \dfrac{X}{\zeta_K(4)}$ \\
\midrule
100   & 8   & 9   & 7   & 9   & 8.05 \\
500   & 38  & 42  & 40  & 42  & 40.27 \\
1000  & 78  & 82  & 80  & 82  & 80.55 \\
5000  & 399 & 404 & 408 & 404 & 402.74 \\
10000 & 808 & 808 & 800 & 808 & 805.48 \\
\bottomrule\\
\end{tabular}
\caption{Distribution of counts across ray classes for $k=4$ and comparison with the main term.}
\label{table:nontrivial-ray-class-example1-fourthfree}
\end{table}

Interestingly, as Tables \ref{table:nontrivial-ray-class-example1-squarefree}, \ref{table:nontrivial-ray-class-example1-cubefree} and \ref{table:nontrivial-ray-class-example1-fourthfree} show, the quantities $Q_K^{k}(X; \mathfrak{A}_2)$ and $Q_K^{k}(X; \mathfrak{A}_4)$ give the exact same value for every $X>1$ and every $k \geq 2$. This is not an accident. For an explanation of this see Remark \ref{remark:same-counting-values}.

\begin{figure}[h!]
\centering
\begin{tikzpicture}
\begin{axis}[
    width=13cm,
    height=8cm,
    xlabel={$X$},
    ylabel={Counting functions},
    grid=both,
    legend style={
        at={(0.02,0.98)},
        anchor=north west,
        draw=none,
        fill=white,
        fill opacity=0.8,
        text opacity=1
    },
]

\addplot[
    only marks,
    color=blue,
    mark=*,
    mark size=2pt
] coordinates {
(100,6)(200,11)(300,19)(400,27)(500,32)(600,44)(700,47)(800,57)(900,62)(1000,66)
(1100,77)(1200,81)(1300,89)(1400,100)(1500,106)(1600,112)(1700,121)(1800,131)(1900,138)(2000,142)
};
\addlegendentry{$Q_K^{2}(X; \mathfrak{A}_1)$}

\addplot[
    only marks,
    color=red,
    mark=square*,
    mark size=2pt
] coordinates {
(100,9)(200,17)(300,23)(400,31)(500,40)(600,46)(700,55)(800,62)(900,69)(1000,78)
(1100,84)(1200,94)(1300,102)(1400,107)(1500,112)(1600,121)(1700,129)(1800,135)(1900,144)(2000,152)
};
\addlegendentry{$Q_K^{2}(X; \mathfrak{A}_2)$}

\addplot[
    only marks,
    color=teal,
    mark=triangle*,
    mark size=2pt
] coordinates {
(100,7)(200,14)(300,24)(400,29)(500,37)(600,44)(700,52)(800,56)(900,67)(1000,74)
(1100,78)(1200,90)(1300,92)(1400,100)(1500,111)(1600,116)(1700,122)(1800,130)(1900,141)(2000,147)
};
\addlegendentry{$Q_K^{2}(X; \mathfrak{A}_3)$}

\addplot[
    only marks,
    color=orange,
    mark=diamond*,
    mark size=2pt
] coordinates {
(100,9)(200,17)(300,23)(400,31)(500,40)(600,46)(700,55)(800,62)(900,69)(1000,78)
(1100,84)(1200,94)(1300,102)(1400,107)(1500,112)(1600,121)(1700,129)(1800,135)(1900,144)(2000,152)
};
\addlegendentry{$Q_K^{2}(X; \mathfrak{A}_4)$}

\addplot[
    color=black,
    thick
] coordinates {
(100,7.410168085)(200,14.82033617)(300,22.23050426)(400,29.64067234)(500,37.05084043)
(600,44.46100851)(700,51.87117660)(800,59.28134468)(900,66.69151277)(1000,74.10168085)
(1100,81.51184894)(1200,88.92201702)(1300,96.33218511)(1400,103.7423532)(1500,111.1525213)
(1600,118.5626894)(1700,125.9728574)(1800,133.3830255)(1900,140.7931936)(2000,148.2033617)
};
\addlegendentry{$C_K(\mathfrak{m},2)\dfrac{X}{\zeta_K(2)}$}

\end{axis}
\end{tikzpicture}
\label{fig:plots}
\caption{A plot of the values of the four counting functions $Q_K^{2}(X; \mathfrak{A}_i)$ and of the main term in the asymptotic for $K = \Q(\sqrt{5})$ and modulus $\mathfrak{m} = \langle 2 \rangle^2 \infty_1 \infty_2$. This gives a visual representation of the equidistribution of ideals in the four different ray classes.}
\end{figure}

\end{example}

\begin{remark}\label{remark:same-counting-values}
The equality $Q_K^k(X;\mathfrak A_2)=Q_K^k(X;\mathfrak A_4)$ observed in Tables \ref{table:nontrivial-ray-class-example1-squarefree}, \ref{table:nontrivial-ray-class-example1-cubefree} and \ref{table:nontrivial-ray-class-example1-fourthfree} can be explained by a symmetry coming from the Galois structure of the field. Let $\sigma \colon K \longrightarrow K$ denote the nontrivial automorphism of $K=\mathbb{Q}(\sqrt{5})$. Since the modulus $\mathfrak m= \langle 4 \rangle \infty_1\infty_2$ is stable under $\sigma$ (the finite part $\langle 4 \rangle$ is fixed and the infinite places are permuted), conjugation by $\sigma$ induces an action on the ray class group $\mathrm{Cl}_K^{\mathfrak m}$.

More concretely, if $\mathfrak a \subseteq \mathcal{O}_K$ is an integral ideal coprime to $\mathfrak m_0$, then $\sigma(\mathfrak a)$ is again such an ideal, and this operation respects the defining congruence and positivity conditions of the ray class group. Thus $\sigma$ permutes the ray classes. In Example \ref{example:nontrivial-ray-class-example}, one easily verifies that $\sigma(\mathfrak A_2)=\mathfrak A_4$.

This symmetry has a direct consequence for the counting problem, which is that the map $\mathfrak a \mapsto \sigma(\mathfrak a)$ defines a bijection between the sets of integral ideals in the ray classes $\mathfrak A_2$ and $\mathfrak A_4$. Moreover, it preserves the norm and the property of being $k$-free, since it simply permutes the prime ideal factorization. Therefore the corresponding counting functions agree identically, that is
\[
Q_K^k(X;\mathfrak A_2)=Q_K^k(X;\mathfrak A_4)
\quad \text{for all } X>0 \text{ and } k \geq 2.
\]

More generally, if $K/\Q$ is a Galois extension, then whenever an automorphism of the Galois group $\Gal{(K/\Q)}$ stabilizes the modulus $\mathfrak m$, it induces a permutation of the ray class group $\op{Cl}_K^{\mathfrak{m}}$, and any two ray classes in the same orbit under this action give rise to identical counting functions.
\end{remark}

\subsection{Acknowledgments}

We would like to thank the organizers of Rethinking Number Theory (RNT), an American Institute of Mathematics research community. RNT provides an accessible, collaborative, and enjoyable research environment for interested mathematicians at any stage of their career. RNT is also supported by the National Science Foundation (DMS--2418528, DMS--2201085). We are especially grateful to the current organizers of RNT, Jen Berg, Heidi Goodson, and Allechar Serrano López. This work began during RNT 6 as part of the Cornflower Blue research team, and it would not have been possible without the workshop and the efforts of its organizers. In particular, A. Barquero-Sanchez and N. Sirolli are grateful for the wonderful opportunity to serve as co-leaders of the research project that led to this paper.

A. Barquero-Sanchez gratefully acknowledges the Centro de Investigación en Matemática Pura y Aplicada (CIMPA) and the School of Mathematics of the University of Costa Rica for their administrative assistance and support throughout this project. This work was carried out as part of the research project 821-C5-139, led by A. Barquero-Sanchez and registered with the Vicerrectoría de Investigación of the University of Costa Rica.

C. Spivey would like to thank her advisor, Keith Conrad, for his support, comments, and suggestions during this project.

We used ChatGPT to assist in developing SageMath wrappers for selected PARI/GP functionality related to ray class groups in the accompanying computational code; all generated code was subsequently reviewed and verified by the authors.

\section{Ray class groups, ray class zeta functions and ideal counting functions}\label{section:ray-class-groups}

In this section we recall the definition and some basic facts about the ray class group associated to a given modulus in a number field $K$ and then introduce the main counting functions that we study in this paper. We closely follow the notation of Sutherland's lecture notes \cite{Sut21}, as it is more modern and explicit than that used in Lang's book and other classical references. Proofs of the facts cited here can be found in \cite[Chaps. VI and VIII]{Lan94} and in \cite{Sut21}.

\subsection{Ray class groups and arithmetic progressions}

Ray class groups provide the natural number-field analogue of reduced residue classes modulo an integer. We first recall their definition and then make this analogy explicit in the case $K=\mathbb{Q}$ in Example \ref{ex:ray-class-groups-over-Q}.

We now recall the definition of ray class groups. We fix throughout this section a number field $K$ of degree $d$, and we denote its ring of integers by $\mathcal{O}_K$. We recall that the normalized absolute values $v \colon K \longrightarrow [0, \infty)$ are given as follows. The \textit{archimedean absolute values} are defined in terms of the embeddings $K \hooklongrightarrow \C$. Specifically, if $\sigma \colon K \hooklongrightarrow \R$ is a real embedding, then the corresponding absolute value is defined by $v_{\sigma}(x) := |\sigma(x)|$ for every $x \in K$, whereas if $\rho \colon K \hooklongrightarrow \C$ is a non-real complex embedding, the corresponding archimedean absolute value is given by $v_{\rho}(x) := |\rho(x)|^2$ for every $x \in K$, where the vertical bars $|\cdot|$ mean the absolute value in $\R$ or the modulus in $\C$, respectively.

On the other hand, the \textit{nonarchimedean absolute values} are defined in terms of the nonzero prime ideals of $K$. Thus, let $\mathfrak{p} \trianglelefteq \mathcal{O}_K$ be a nonzero prime ideal of $K$. Then the corresponding normalized absolute value is defined by 
$$
v_{\mathfrak{p}}(x) := \mathbf{N}_{K/\Q}(\mathfrak{p})^{-\operatorname{ord}_{\mathfrak{p}}(x)}
$$
for every $x \in K^{\times}$ and $v_{\mathfrak{p}}(0) := 0$. 

We let $M_K$ be the set of all the different normalized absolute values defined above. Moreover, we also let $M_K^{\infty}$ (resp. $M_K^0$) be the subset of archimedean (resp. nonarchimedean) absolute values in $M_K$. Note in particular that if $\rho \colon K \hooklongrightarrow \C$ is a complex embedding, then the absolute values $v_{\rho}$ and $v_{\overline{\rho}}$ are actually the same function, so each pair of complex conjugate embeddings $\rho, \overline{\rho}$ accounts for exactly one archimedean absolute value in $M_K$.

Now, a \textit{modulus} $\mathfrak{m}$ for $K$ is a function $\mathfrak{m} \colon M_K \longrightarrow \Z_{\geq 0}$ such that
\begin{enumerate}
    \item $\mathfrak{m}(v) = 0$ for almost every $v \in M_K$,
    \item $\mathfrak{m}(v_{\rho}) = 0$ for every complex embedding $\rho \colon K \hooklongrightarrow \C$, and
    \item $\mathfrak{m}(v_{\sigma}) \in \{ 0, 1 \}$ for every real embedding $\sigma \colon K \hooklongrightarrow \R$.
\end{enumerate}
Alternatively, such a modulus is also commonly written as formal product $\mathfrak{m} = \mathfrak{m}_0 \mathfrak{m}_\infty$, where
\begin{align*}
\mathfrak{m}_0:=\prod_{v_{\mathfrak{p}} \in M_K^{0}} \mathfrak{p}^{\mathfrak{m}(v_{\mathfrak{p}})}
\quad \text{and} \quad
\mathfrak{m}_{\infty}:=\prod_{\substack{v \in M_K^{\infty}\\\text{$v$ real}}} v^{\mathfrak{m}(v)}
.
\end{align*}
Here $\mathfrak{m}_0$ is called the \textit{finite part} of $\mathfrak{m}$ and is regarded as a nonzero integral ideal $\mathfrak{m}_0 \trianglelefteq \mathcal{O}_K$, while $\mathfrak{m}_{\infty}$ is called the \textit{infinite part} of $\mathfrak{m}$. Thus, the infinite part of a modulus may be viewed
simply as a subset of the real places of $K$.

Now, associated to a modulus $\mathfrak{m}$ we introduce the following notation. First we let $\mathcal{F}_K$ be the multiplicative group of fractional ideals of $K$ and $\mathcal{F}_K^{\mathfrak{m}}$ be the subgroup of $\mathcal{F}_K$ of fractional ideals coprime to $\mathfrak{m}_0$, defined by
\begin{align*}
\mathcal{F}_K^{\mathfrak{m}} := \left\{
\mathfrak{f}\in \mathcal{F}_K
\ \middle|\
\operatorname{ord}_{\mathfrak{p}}(\mathfrak{f})=0
\text{ for every } \mathfrak{p}\mid \mathfrak{m}_0
\right\}.
\end{align*}

We define $K^{\mathfrak{m},1}$ to be the subgroup of $K^{\times}$ consisting of elements $\alpha \in K^{\times}$ that satisfy the conditions
\begin{enumerate}
\item $\gcd{(\langle \alpha \rangle, \mathfrak{m}_0)} = 1$,
\item $\ord_\mathfrak{p}(\alpha - 1) \geq \mathfrak{m}(v_{\mathfrak{p}})$ for every $\mathfrak{p} \mid \mathfrak{m}_0$, and
\item $\sigma(\alpha) > 0$ for every $v_{\sigma} \in M_K^{\infty}$ for which $\mathfrak{m}(v_{\sigma}) = 1$.
\end{enumerate}
We also define the subgroup of \emph{rays} $\mathcal{R}_K^{\mathfrak{m}} \subseteq \mathcal{F}_K^{\mathfrak{m}}$ consisting of the principal fractional ideals $\langle \alpha \rangle \in \mathcal{F}_K^{\mathfrak{m}}$ such that the generator $\alpha \in K^{\mathfrak{m}, 1}$.

Then, the \textit{ray class group} of $K$ modulo $\mathfrak{m}$ is defined to be the quotient
\begin{align*}
\mathrm{Cl}_K^{\mathfrak{m}}:=\mathcal{F}_K^{\mathfrak{m}} / \mathcal{R}_K^{\mathfrak{m}}.
\end{align*}
This group is known to be finite and its cardinality $h_K^{\mathfrak{m}}:=\# \mathrm{Cl}_K^{\mathfrak{m}}$ is given by 

\begin{align}\label{eqn:ray-class-number-formula}
h_K^{\mathfrak{m}} = \frac{\varphi(\mathfrak{m}_0) 2^{\#\mathfrak{m}_{\infty}} h_K}{\left[\mathcal{O}_K^{\times}: \mathcal{O}_K^{\times} \cap K^{\mathfrak{m}, 1}\right]},
\end{align}
where $h_K:=\# \mathrm{Cl}_K$ is the class number of $K$ and
$$
\varphi(\mathfrak{m}_0) := \#\left(\mathcal{O}_K / \mathfrak{m}_0\right)^{\times}= \Nrm{K/\Q}{\mathfrak{m}_0} \prod_{\mathfrak{p} \mid \mathfrak{m}_0}\left( 1 - \frac{1}{\Nrm{K/\Q}{\mathfrak{p}}} \right).
$$
For a proof see for example \cite[Chap. VI, \S 1, Thm. 1]{Lan94} or \cite[Corollary 21.9]{Sut21}. Moreover, the class number $h_K$ divides the ray class number $h_K^{\mathfrak{m}}$.

\begin{remark}
Observe that if we let $\mathbf{0} \colon M_K \longrightarrow \Z_{\geq 0}$ denote the zero modulus with $\mathbf{0}(v) = 0$ for every $v \in M_K$, then we have $\mathrm{Cl}_K^{\mathbf{0}} = \mathrm{Cl}_K$. Note that with the convention that sees a modulus as a formal product, this corresponds to taking the modulus to be $\mathfrak{m} = (1)$, just the trivial ideal without including any infinite places.
\end{remark}

As was mentioned in the introduction, ray class groups provide a natural way to generalize arithmetic progressions to arbitrary algebraic number fields. The following classic example makes this observation precise.

\begin{example}\label{ex:ray-class-groups-over-Q}
Let $q \in \Z_{\geq 1}$ and consider the modulus for $\Q$ given by $\mathfrak{m} = (q) \cdot \infty$, with finite part $\mathfrak{m}_0=(q)$ and infinite part  $\mathfrak{m}_{\infty} = \infty$ corresponding to the unique real place of $\Q$. In this case the ray class group modulo $\mathfrak{m}$ is given by the quotient
\[
\Cl_{\Q}^{\mathfrak{m}} = \mathcal{F}_{\Q}^{\mathfrak{m}}/\mathcal{R}_{\Q}^{\mathfrak{m}},
\]
where $\mathcal{F}_{\Q}^{\mathfrak{m}}$ is the group of fractional ideals of $\Z$ that are relatively prime to $(q)$ and $\mathcal{R}_{\Q}^{\mathfrak{m}}$ is the subgroup generated by principal ideals $(\alpha)$ with $\alpha>0$ and
\[
\ord_p(\alpha-1)\geq \ord_p(q)
\qquad\text{for every prime }p\mid q.
\]
Since $\Z$ is a PID, every fractional ideal in $\mathcal{F}_{\Q}^{\mathfrak{m}}$ has a unique positive generator. Thus every such ideal can be written uniquely as $(a/b)$ with $a, b \in \Z_{\geq 1}$, $\gcd{(a, b)} = 1$ and $\gcd{(ab, q)} = 1$. Hence $a$ and $b$ are units modulo $q$ and this allows us to define a map
\[
\phi \colon \mathcal{F}_{\Q}^{\mathfrak{m}} \longrightarrow (\Z/q\Z)^{\times}, \quad \phi((a/b)) := a b^{-1} \pmod{q}.
\]
This map is a surjective group homomorphism. Moreover, its kernel is precisely $\mathcal{R}_{\mathbb{Q}}^{\mathfrak{m}}$. Indeed,
\[
ab^{-1}\equiv 1 \pmod q
\qquad\Longleftrightarrow\qquad
a\equiv b \pmod q.
\]
Since $p\nmid b$ for every $p\mid q$, this is equivalent to
\[
\operatorname{ord}_p\left(\frac{a}{b}-1\right)
=
\operatorname{ord}_p\left(\frac{a-b}{b}\right)
=
\operatorname{ord}_p(a-b)
\geq
\operatorname{ord}_p(q)
\qquad\text{for every }p\mid q.
\]
Together with the positivity of the generator $a/b$, this implies that
\[
\ker(\phi)=\mathcal{R}_{\mathbb{Q}}^{\mathfrak{m}}.
\]
Thus, the First Isomorphism Theorem for groups induces an isomorphism
\[
\overline{\phi} \colon \Cl_{\Q}^{\mathfrak{m}} \longrightarrow (\Z/q\Z)^{\times}, \quad \overline{\phi}((a/b) \mathcal{R}_{\Q}^{\mathfrak{m}}) := a b^{-1} \pmod{q}.
\]

Under this isomorphism, the ray class of the integral ideal $(n)$, with $(n,q)=1$, corresponds to the reduced residue class of $n$ modulo $q$. Thus, for $K=\mathbb{Q}$ and $\mathfrak{m}=(q)\cdot\infty$, ray classes are precisely reduced arithmetic progressions modulo $q$.
\end{example}

\subsection{Ray class zeta functions}

Let $\mathfrak{m}$ be a modulus for $K$, which will be fixed throughout this section.
Denote by $\mathcal{I}_K$ the set of nonzero integral ideals in $K$ and $\mathcal{I}_K^{\mathfrak{m}} := \{ \mathfrak{a} \in \mathcal{I}_K \suchthat \gcd{(\mathfrak{a}, \mathfrak{m}_0)} = 1 \}$.

For a ray class $\mathfrak{A} \in \Cl_K^{\mathfrak{m}}$ we define the associated \textit{ray class zeta function}
\begin{align}\label{eqn:ray-class-zeta-function}
\zeta_K(s;\mathfrak{A})
:=
\sum_{\substack{
\mathfrak{a}\in \mathcal{I}_K^{\mathfrak{m}}\\
[\mathfrak{a}]=\mathfrak{A}
}}
\frac{1}{\mathbf{N}_{K/\mathbb{Q}}(\mathfrak{a})^s}, \qquad \operatorname{Re}(s) > 1.
\end{align}
We also define the \textit{global zeta function}
\begin{align}
\label{eqn:global-zeta-function}
\zeta_K(s; \mathfrak{m}) := \sum_{\mathfrak{A}\in \mathrm{Cl}_K^{\mathfrak{m}}}
\zeta_K(s; \mathfrak{A}) = \sum_{\mathfrak{a} \in \mathcal{I}_K^{\mathfrak{m}}} \frac{1}{\Nrm{K/\Q}{\mathfrak{a}}^s}, \qquad \operatorname{Re}(s) > 1.
\end{align}
We observe that
\begin{align}\label{eqn:dedekind-zeta-and-zeta-modulo}
\zeta_K(s; \mathfrak{m}) = \prod_{\mathfrak{p} \nmid \mathfrak{m}_0} \left( 1 - \frac{1}{\Nrm{K/\Q}{\mathfrak{p}}^s} \right)^{-1} = \zeta_{K}(s) \prod_{\mathfrak{p} \mid \mathfrak{m}_0} \left( 1 - \frac{1}{\Nrm{K/\Q}{\mathfrak{p}}^s} \right),
\end{align}
so that $\zeta_K(s; \mathfrak{m})$ differs from the Dedekind Zeta function $\zeta_K(s)$ by a finite Euler product corresponding to the prime ideals that divide the finite part $\mathfrak{m}_0$.

\subsection{Ideal counting functions}

We now define several sets of ideals and their corresponding counting functions. We will use uppercase German Fraktur letters like $\mathfrak{A}$ to denote ray classes and lowercase letters like $\mathfrak{a}$ to denote integral ideals. In certain cases we will also denote the ray class of the ideal $\mathfrak{a}$ by $[\mathfrak{a}]$.

Let $\mathfrak{A} \in \mathrm{Cl}_K^{\mathfrak{m}}$ be a ray class modulo $\mathfrak{m}$.
Recall that for any real number $X > 0$, there are only a finite number of integral ideals $\mathfrak{a} \trianglelefteq \mathcal{O}_K$ with norm $\Nrm{K/\Q}{\mathfrak{a}} \leq X$. 
Thus we define the (finite) sets
\begin{align*}
\mathcal{I}_K(X) &:= \{ \mathfrak{a} \in \mathcal{I}_K \suchthat \Nrm{K/\Q}{\mathfrak{a}} \leq X \},\\
\mathcal{I}_K^{\mathfrak{m}}(X) &:= \{ \mathfrak{a} \in \mathcal{I}_K^{\mathfrak{m}} \suchthat \Nrm{K/\Q}{\mathfrak{a}} \leq X \},\\
\mathcal{I}_K^{\mathfrak{m}}(X; \mathfrak{A}) &:= \left\{ \mathfrak{a} \in \mathcal{I}_K^{\mathfrak{m}} \suchthat \text{$\mathfrak{a} \in \mathfrak{A}$ and $\Nrm{K/\Q}{\mathfrak{a}} \leq X$} \right\}.
\end{align*}

\begin{example}
    \label{ex:classQ}
    Let $K = \Q$.
    Given a positive integer $q \geq 1$ consider the modulus $\mathfrak{m} = (q) \cdot \infty$.
    Let $a$ be a positive integer prime to $q$, and let $\mathfrak{A}$ denote the ray class of $(a)$.
    Then the map $n \mapsto (n)$ gives a bijection
   \begin{equation}
        \label{eqn:arith_progression}
       \{n \in \Z_{\geq 1} \suchthat n \equiv a \bmod{q}, \, n \leq X \}
       \longrightarrow
       \mathcal{I}^\mathfrak{m}_\Q(X;\mathfrak{A}),
   \end{equation} 
   which maps $k$-free integers to $k$-free ideals.
   Applying Theorem~\ref{thm:k-free-ideals-in-ray-classes} then gives \eqref{eqn:cohen}, since $\rho_\Q^\mathfrak{m} = 1/q$.
   Moreover, in this case Theorem~\ref{thm:main-theorem} agrees with \cite[Theorem 1]{nymann1992distribution}.
\end{example}

Now we consider the respective counting functions
\begin{align*}
\mathcal{H}_K(X) &:= \# \mathcal{I}_K(X),\\
\mathcal{H}_K^{\mathfrak{m}}(X) &:= \# \mathcal{I}_K^{\mathfrak{m}}(X),\\
\mathcal{H}_K^{\mathfrak{m}}(X; \mathfrak{A}) &:= \# \mathcal{I}_K^{\mathfrak{m}}(X; \mathfrak{A}).
\end{align*}

A classical result that combines work due to Dedekind (1894) and Weber (1897) gives the asymptotic growth of the counting function $\mathcal{H}_K(X)$ as
\begin{align}
\label{eqn:dedekind-weber}
\mathcal{H}_K(X) = \rho_K X + O(X^{1 - 1/d}),
\end{align}
where 
\begin{align*}
\rho_K = \underset{s=1}{\operatorname{Res}}\,\zeta_K(s)=\frac{2^{r_1}(2 \pi)^{r_2} \operatorname{Reg}_K h_K}{w_K\left|d_K\right|^{1 / 2}}.
\end{align*}
Similarly, for the counting function $\mathcal{H}_K^{\mathfrak{m}}(X; \mathfrak{A})$ we have the following asymptotic formula, a proof of which can be found, for instance, in \cite[Chap. VI, \S 3, Thm. 3]{Lan94}.

\begin{theorem}\label{thm:Lang-asymptotic-formula}
For any ray class $\mathfrak{A} \in \mathrm{Cl}_K^{\mathfrak{m}}$, as $X \to \infty$ we have
\begin{align}\label{eqn:ideal-counting-asymptotic-for-ray-classes}
\mathcal{H}_K^{\mathfrak{m}}(X; \mathfrak{A}) = \rho_{K}^{\mathfrak{m}} X + O(X^{1 - 1/d}),
\end{align}
where $\rho_{K}^{\mathfrak{m}}$ is given by 
\begin{align}
\label{eqn:residue}
\rho_{K}^{\mathfrak{m}} = \underset{s=1}{\operatorname{Res}} \, \zeta_K(s; \mathfrak{A}) = \frac{2^{r_1} (2\pi)^{r_2} \mathrm{Reg}_{\mathfrak{m}}}{w_{\mathfrak{m}} |d_K|^{1/2} 2^{\# \mathfrak{m}_{\infty}}\Nrm{K/\Q}{\mathfrak{m}_0}}.
\end{align}
Here $\mathrm{Reg}_{\mathfrak{m}}$ is the regulator of the subgroup of units $\mathcal{O}_K^{\times} \cap K^{\mathfrak{m}, 1}$ and $w_{\mathfrak{m}}$ is the number of roots of unity contained in this subgroup. Moreover, equation \eqref{eqn:ideal-counting-asymptotic-for-ray-classes} implies that
\begin{align}\label{eqn:ideal-counting-asymptotic-for-ideals-prime-to-m}
\mathcal{H}_K^{\mathfrak{m}}(X) = h_K^{\mathfrak{m}} \rho_{K}^{\mathfrak{m}} X + O(X^{1 - 1/d}).
\end{align}
\end{theorem}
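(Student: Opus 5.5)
We would prove Theorem~\ref{thm:Lang-asymptotic-formula} by reducing it to counting lattice points in a homogeneous region, in the spirit of Dedekind--Weber. First we parametrize the class by elements of a lattice coset.

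\begin{enumerate}
\item Fix an integral ideal $\mathfrak{b}$ coprime to $\mathfrak{m}_0$ whose ray class is $\mathfrak{A}^{-1}$. Such an ideal exists because every ray class contains integral ideals prime to any given ideal. Then an integral $\mathfrak{a}\in\mathcal{I}_K^{\mathfrak{m}}$ lies in $\mathfrak{A}$ if and only if $\mathfrak{a}\mathfrak{b}=\langle\alpha\rangle$ with $\alpha\in K^{\mathfrak{m},1}$.
\item The conditions on $\alpha$ are the following.
\begin{enumerate}
\item $\alpha\in\mathfrak{b}$, which expresses that $\mathfrak{a}$ is integral.
\item $\alpha\equiv 1\pmod{\mathfrak{m}_0}$ in the multiplicative sense. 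Since $\mathfrak{b}$ is coprime to $\mathfrak{m}_0$, the Chinese remainder theorem combines this with (a) into a single coset condition $\alpha\in\alpha_0+\mathfrak{m}_0\mathfrak{b}$.
\item $\sigma(\alpha)>0$ for every real place $v_\sigma$ dividing $\mathfrak{m}_\infty$.
\end{enumerate}
\item The generator $\alpha$ is determined by $\mathfrak{a}$ only up to the group $U_{\mathfrak{m}}:=\mathcal{O}_K^{\times}\cap K^{\mathfrak{m},1}$.
\end{enumerate}
Consequently $\mathcal{H}_K^{\mathfrak{m}}(X;\mathfrak{A})$ equals the number of points of the lattice coset $\alpha_0+\mathfrak{m}_0\mathfrak{b}\subset K\otimes\R\cong\R^{r_1}\times\C^{r_2}$ that lie in a fundamental domain $D$ for the action of $U_{\mathfrak{m}}$. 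The domain $D$ is restricted to the sign conditions in (c) and satisfies $|N(x)|\le X\,\Nrm{K/\Q}{\mathfrak{b}}$.

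Next we set up the counting region. Following Lang, we build $D$ as a cone: we use the logarithmic embedding to take a fundamental parallelotope for the image of $U_{\mathfrak{m}}$ in the trace-zero hyperplane, and we cut by the $w_{\mathfrak{m}}$ roots of unity in $U_{\mathfrak{m}}$ by fixing an argument sector. The truncation $D(T)=\{x\in D : |N(x)|\le T\}$ satisfies $D(T)=T^{1/d}D(1)$. The set $D(1)$ is bounded, and its boundary is $(d-1)$-Lipschitz parametrizable. The standard lattice-point lemma then gives
\[
\#\bigl((\alpha_0+L)\cap D(T)\bigr)=\frac{\operatorname{vol}D(1)}{\operatorname{covol}(L)}\,T+O\bigl(T^{1-1/d}\bigr),
\]
where the implied constant depends only on $L$ and $D(1)$ and not on the translate $\alpha_0$.

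We then take $L=\mathfrak{m}_0\mathfrak{b}$ and $T=X\Nrm{K/\Q}{\mathfrak{b}}$. The covolume is $2^{-r_2}|d_K|^{1/2}\Nrm{K/\Q}{\mathfrak{m}_0}\Nrm{K/\Q}{\mathfrak{b}}$, so the factor $\Nrm{K/\Q}{\mathfrak{b}}$ cancels. The volume computation is the same as in the proof of the analytic class number formula:
\begin{itemize}
\item the $r_1+r_2-1$ logarithmic coordinates contribute $\operatorname{Reg}_{\mathfrak{m}}$;
\item the norm coordinate contributes $1$;
\item the complex arguments contribute $(2\pi)^{r_2}$, divided by $w_{\mathfrak{m}}$ for the roots of unity;
\item the real signs contribute $2^{r_1}$, divided by $2^{\#\mathfrak{m}_\infty}$ for the imposed positivity.
\end{itemize}
Collecting these factors gives exactly the constant $\rho_K^{\mathfrak{m}}$ of \eqref{eqn:residue}. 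This constant is independent of $\mathfrak{A}$, and the error term is $O(X^{1-1/d})$.

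To identify this constant as the residue, we write $\zeta_K(s;\mathfrak{A})=s\int_1^\infty \mathcal{H}_K^{\mathfrak{m}}(t;\mathfrak{A})\,t^{-s-1}\,dt$ by partial summation. The error term contributes a function holomorphic for $\operatorname{Re}(s)>1-1/d$, so the residue at $s=1$ equals $\rho_K^{\mathfrak{m}}$. Finally, summing the asymptotic over the $h_K^{\mathfrak{m}}$ classes gives \eqref{eqn:ideal-counting-asymptotic-for-ideals-prime-to-m}.

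We expect the main obstacle to be the geometric step. Two things must be checked there. The first is that the boundary of $D(1)$ is Lipschitz parametrizable; we would handle this by parametrizing via the exponential map from the compact fundamental parallelotope in log-space. The second is the bookkeeping of the constants $2^{r_2}$, $w_{\mathfrak{m}}$ and $2^{\#\mathfrak{m}_\infty}$ in the volume computation. For both, we would follow Lang's argument for $\mathfrak{m}=(1)$ and insert the congruence and sign restrictions.
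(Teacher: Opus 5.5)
Your proposal is correct and is essentially Lang's lattice-point argument (Chap.~VI, \S 3, with the residue recovered from the counting function by partial summation as in the Chap.~VIII reference), which the paper cites rather than reproduces, so your route coincides with the paper's. One factor in your volume computation needs fixing: at each complex place the polar coordinates with radial variable $|z|^2$ give $dx\,dy=\tfrac12\,d(|z|^2)\,d\theta$, so in fact $\operatorname{vol}D(1)=2^{r_1-\#\mathfrak{m}_\infty}\pi^{r_2}\operatorname{Reg}_{\mathfrak{m}}/w_{\mathfrak{m}}$, and this Jacobian factor $2^{-r_2}$ is what cancels the $2^{-r_2}$ in $\operatorname{covol}(\mathfrak{m}_0\mathfrak{b})$ to leave $(2\pi)^{r_2}$ in $\rho_K^{\mathfrak{m}}$ (the factors exactly as you list them would give an extra $2^{r_2}$).
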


\begin{remark}
The fact that $\rho_{K}^{\mathfrak{m}} = \underset{s=1}{\operatorname{Res}} \, \zeta_K(s; \mathfrak{A})$ can be found in \cite[Chap. VIII, \S 2, Thm. 5c]{Lan94}. Also, an immediate consequence of this and equation \eqref{eqn:global-zeta-function} is that $\underset{s=1}{\operatorname{Res}} \, \zeta_K(s; \mathfrak{m}) = h_K^{\mathfrak{m}} \rho_{K}^{\mathfrak{m}}$ (see \cite[Chap. VIII, \S 2, Corollary to Thm. 5c, p. 161]{Lan94}).
\end{remark}

Finally, we recall that, when $d>1$, the error term in the ideal-counting asymptotic can be improved in the cases of the ordinary and narrow class groups. For the ordinary class group this goes back to Landau \cite[Satz 210]{landau1949}, while the corresponding statement for narrow classes can be found in Narkiewicz \cite[p. 417]{narkiewicz1974}. In small degrees, sharper estimates are known; for instance, see
\cite{huxley1994number} in the case of quadratic fields.

\begin{theorem}\label{thm:Landau-asymptotic-formula}
Assume that either of the following two conditions holds:
\begin{itemize}
    \item[(i)] $\mathfrak{m}(v) = 0$ for every $v \in M_K$ (so that $\Cl_K^{\mathfrak{m}} = \Cl_K$ is the ordinary class group), or
    \item[(ii)] $\mathfrak{m}(v) = 0$ for every $v \in M_K^0$ and $\mathfrak{m}(v) = 1$ for every real $v \in M_K^\infty$ (so that $\Cl_K^{\mathfrak{m}} = \Cl_K^{+}$ is the narrow class group).
\end{itemize}
Then for any ray class $\mathfrak{A} \in \mathrm{Cl}_K^{\mathfrak{m}}$, as $X \to \infty$ we have
\begin{align}\label{eqn:ideal-counting-asymptotic-for-ray-classes-improved}
\mathcal{H}_K^{\mathfrak{m}}(X; \mathfrak{A}) = \rho_{K}^{\mathfrak{m}} X + O(X^{1 - 2/(d+1)}).
\end{align}
\end{theorem}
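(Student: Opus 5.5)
The plan is to follow Landau's classical approach, which rests on the analytic properties of the partial zeta functions $\zeta_K(s;\mathfrak{A})$ rather than on lattice-point geometry alone. In both cases (i) and (ii), $\mathfrak{m}_0=(1)$. Hecke's theory then says that each $\zeta_K(s;\mathfrak{A})$ has the following properties:
\begin{itemize}
\item it continues meromorphically to $\C$, with only a simple pole at $s=1$ of residue $\rho_K^{\mathfrak{m}}$ (by \eqref{eqn:residue});
\item it has finite order in vertical strips;
\item it satisfies a functional equation of the shape
\[
\Lambda(s;\mathfrak{A}) = \Lambda(1-s;\mathfrak{A}'), \qquad \Lambda(s;\mathfrak{A}) = \left(|d_K|\pi^{-d}\right)^{s/2}\Gamma(s/2)^{r_1}\Gamma(s)^{r_2}\zeta_K(s;\mathfrak{A}).
\]
\end{itemize}
Here $\mathfrak{A}'$ is a suitable dual class, and in the narrow case the Gamma factors are twisted by the sign characters at the real places. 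Equivalently, I could expand in class group characters $\chi$, writing $\zeta_K(s;\mathfrak{A})=\frac{1}{h}\sum_\chi\overline{\chi}(\mathfrak{A})L(s,\chi)$, and use the functional equation of each Hecke $L$-function. The key point is that the total Gamma factor has ``degree'' $d$, i.e.\ the sum of the coefficients of $s$ inside the Gamma functions is $d/2$. This degree is what produces the exponent $1-\tfrac{2}{d+1}=\tfrac{d-1}{d+1}$.

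The steps, in order, are as follows.
\begin{enumerate}
\item Write the Riesz mean of order $\ell$ (with $\ell$ a large integer) as a Perron integral:
\[
\frac{1}{\ell!}\sum_{\Nrm{K/\Q}{\mathfrak{a}}\le X}\left(X-\Nrm{K/\Q}{\mathfrak{a}}\right)^{\ell}
=\frac{1}{2\pi i}\int_{(2)}\zeta_K(s;\mathfrak{A})\frac{X^{s+\ell}\,\Gamma(s)}{\Gamma(s+\ell+1)}\,ds,
\]
where the sum runs over $\mathfrak{a}\in\mathfrak{A}$.
\item Shift the contour to $\operatorname{Re}s=-\varepsilon$. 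The residue at $s=1$ produces the main term $\rho_K^{\mathfrak{m}}X^{\ell+1}/(\ell+1)!$. The poles at $s=0,-1,\dots$ give lower-order terms; these vanish after differencing, or contribute $O(1)$.
\item On the new line, apply the functional equation and expand $\zeta_K(1-s;\mathfrak{A}')$ as an absolutely convergent Dirichlet series. This turns the remaining integral into a sum over dual ideals $\mathfrak{b}$ of integrals of Meijer $G$/Bessel type. Stirling's formula then gives each such integral the size
\[
X^{\frac{d-1}{2d}+\ell\left(1-\frac{1}{d}\right)}\,\Nrm{K/\Q}{\mathfrak{b}}^{-\frac{d+1}{2d}-\frac{\ell}{d}}.
\]
For $\ell$ large this is summable in $\mathfrak{b}$, so the smoothed error is $O\bigl(X^{\ell+\frac{d-1}{2d}-\frac{\ell}{d}}\bigr)$.
\item Recover $\mathcal{H}_K^{\mathfrak{m}}(X;\mathfrak{A})$ itself by Landau's finite-difference operator $\Delta_y^{\ell}$ with step $y\le X$. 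Because the coefficients are nonnegative, $\mathcal{H}$ is monotone, so it is sandwiched between $y^{-\ell}\Delta_y^{\ell}$ of the Riesz mean at $X$ and at $X-\ell y$. This gives
\[
\mathcal{H}_K^{\mathfrak{m}}(X;\mathfrak{A})-\rho_K^{\mathfrak{m}}X \ll y + y^{-\ell}\cdot(\text{difference of the error terms}).
\]
\item Do not difference the whole error term trivially. Instead, apply $\Delta_y^\ell$ termwise to the oscillatory Bessel expansion, using the trivial bound for terms with $\Nrm{K/\Q}{\mathfrak{b}}$ large and the $\ell$-fold difference (derivative) bound for terms with $\Nrm{K/\Q}{\mathfrak{b}}$ small, with the split chosen in terms of $X$ and $y$. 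The total error becomes $O\bigl(y+X^{\frac{d-1}{2d}}(X/y)^{\frac{d-1}{2d}}\bigr)$. Choosing $y=X^{(d-1)/(d+1)}$ balances the two terms and yields \eqref{eqn:ideal-counting-asymptotic-for-ray-classes-improved}.
\end{enumerate}

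For case (ii), the same argument runs with the narrow partial zeta functions. Their Gamma factors $\Gamma((s+a_v)/2)$, with $a_v\in\{0,1\}$, again have total degree $d$, so none of the exponent bookkeeping changes.

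The main obstacle is step~5: showing that the $\ell$-th differences of the dual-series expansion have the claimed size uniformly, and choosing the cutoff in $\Nrm{K/\Q}{\mathfrak{b}}$ so that the two regimes balance. This needs a uniform asymptotic for the kernel integral $\frac{1}{2\pi i}\int \frac{\Gamma\text{-ratio}(1-s)}{\Gamma\text{-ratio}(s)}\frac{\Gamma(s)}{\Gamma(s+\ell+1)}(xX)^s\,ds$ as $xX\to\infty$. I would first try to obtain it by a saddle-point or Stirling analysis, as in Landau's \emph{Einführung}, and only the leading-order bound is needed. Summability over $\mathfrak{b}$ then follows from the convergence of $\sum_{\mathfrak{b}}\Nrm{K/\Q}{\mathfrak{b}}^{-\sigma}$ for $\sigma>1$, using the crude bound \eqref{eqn:ideal-counting-asymptotic-for-ray-classes} for partial summation.
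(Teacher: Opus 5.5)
Your route (Riesz means, Hecke's functional equation, a Voronoi-type dual expansion, and Landau's finite-difference sandwich) is exactly Landau's argument. The paper gives no proof of its own here; it cites \cite[Satz 210]{landau1949} and Narkiewicz, whose proofs run this way.

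However, the bookkeeping in step~5, which is where the exponent is actually produced, is wrong as written. After dividing by $y^{\ell}$, your two per-term bounds for the $\mathfrak{b}$-term are
\[
X^{\frac{d-1}{2d}}\,\Nrm{K/\Q}{\mathfrak{b}}^{-\frac{d+1}{2d}}
\quad\text{(derivative bound)},
\qquad
y^{-\ell}X^{\frac{d-1}{2d}+\ell\frac{d-1}{d}}\,\Nrm{K/\Q}{\mathfrak{b}}^{-\frac{d+1}{2d}-\frac{\ell}{d}}
\quad\text{(trivial bound)}.
\]
Their ratio is $\bigl(X^{(d-1)/d}y^{-1}\Nrm{K/\Q}{\mathfrak{b}}^{-1/d}\bigr)^{\ell}$, so they cross at $\Nrm{K/\Q}{\mathfrak{b}}=Z:=X^{d-1}y^{-d}$, not at $X/y$. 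Your expression $X^{\frac{d-1}{2d}}(X/y)^{\frac{d-1}{2d}}$ is the head sum for a cutoff at $X/y$, and that cutoff fails:
\begin{itemize}
\item With the cutoff at $X/y$, the tail equals the head times $(X^{d-2}y^{1-d})^{\ell/d}$. At $y=X^{(d-1)/(d+1)}$ this factor is $X^{\ell(d-3)/(d(d+1))}$, which is unbounded for $d\ge4$.
\item The choice $y=X^{(d-1)/(d+1)}$ does not balance your two terms. For $d=2$ your second term is $X^{1/4}X^{1/6}=X^{5/12}$, which is larger than $X^{1/3}$.
\item Balancing your expression honestly would give $y=X^{2(d-1)/(3d-1)}$. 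For $d\ge4$ this would beat Landau's exponent, which already shows the expression cannot be right.
\end{itemize}

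The repair is to split at $Z=X^{d-1}y^{-d}$. Then both the head sum and the tail sum are $\ll X^{\frac{d-1}{2d}}Z^{\frac{d-1}{2d}}=(X/y)^{\frac{d-1}{2}}$; the tail bound uses $\ell>\frac{d-1}{2}$. So the total error is $O\bigl(y+(X/y)^{\frac{d-1}{2}}\bigr)$, and $y=X^{(d-1)/(d+1)}$ balances this exactly, giving \eqref{eqn:ideal-counting-asymptotic-for-ray-classes-improved}.

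A smaller point concerns case~(ii). The partial zeta function of a narrow class satisfies no single functional equation $\Lambda(s;\mathfrak{A})=\Lambda(1-s;\mathfrak{A}')$, because the characters of $\Cl_K^{+}$ have different archimedean types. So steps~3--5 must be run character by character, as your alternative suggests. This is legitimate: positivity enters only through the monotonicity of $\mathcal{H}_K^{\mathfrak{m}}(X;\mathfrak{A})$ in step~4, and the Riesz-mean expansion is linear in the Dirichlet series.
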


We now define the counting functions that will be our main interest in this article.

\begin{definition}
Suppose that $k, r \in \Z_{\geq 1}$, and let $\mathfrak{A} \in \mathrm{Cl}_K^{\mathfrak{m}}$ be a ray class modulo $\mathfrak{m}$.
Then for any $X > 0$ we define the following functions.
\begin{enumerate}
    \item We let $Q_K^{k}(X)$ denote the number of ideals $\mathfrak{a} \in \mathcal{I}_K(X)$ that are $k$-free, that is, such that there does not exist a nonzero prime ideal $\mathfrak{p} \subseteq \mathcal{O}_K$ with $\mathfrak{p}^k \mid \mathfrak{a}$.
    
    \item We let $Q_K^{k}(X; \mathfrak{A})$ denote the number of ideals $\mathfrak{a} \in \mathcal{I}_K^{\mathfrak{m}}(X; \mathfrak{A})$ that are $k$-free.

    \item We let $Q_K^{k, r}(X)$ denote the number of $r$-tuples $(\mathfrak{a}_1, \dots, \mathfrak{a}_r) \in \mathcal{I}_K(X)^r$ of nonzero integral ideals $\mathfrak{a}_i \in \mathcal{I}_K(X)$ such that the ideals $\mathfrak{a}_1, \dots, \mathfrak{a}_r$ are relatively $k$-free, that is, there does not exist a nonzero prime ideal $\mathfrak{p} \subseteq \mathcal{O}_K$ such that $\mathfrak{p}^k \mid \mathfrak{a}_i$ for every $i = 1, \dots, r$.

    \item Given an $r$-tuple of ray classes $\boldsymbol{\mathfrak{A}} = (\mathfrak{A}_1,\dots,\mathfrak{A}_r) \in \left(\mathrm{Cl}_K^{\mathfrak{m}}\right)^r$, we let
    \[
    Q_K^{k,r}(X; \boldsymbol{\mathfrak{A}}) := Q_K^{k,r}(X;\mathfrak{A}_1,\dots,\mathfrak{A}_r)
    \]
    denote the number of relatively $k$-free $r$-tuples
    \[
    (\mathfrak{a}_1,\dots,\mathfrak{a}_r)
    \in
    \prod_{i=1}^r
    \mathcal{I}_K^{\mathfrak{m}}(X;\mathfrak{A}_i).
    \]

\end{enumerate}
\end{definition}

\begin{remark}
We note that the function $Q_K^{k, r}(X)$ was the one considered by Sittinger in \cite{Sit10}, where the author proves that, when $kr > 1$,
\begin{equation}
    \label{eqn:sittinger}
    \lim_{X \to +\infty} \frac{Q_K^{k,r}(X)}{\mathcal{H}_K(X)^r} = \frac{1}{\zeta_K(rk)}.
\end{equation}
\end{remark}

\section{\texorpdfstring{Equidistribution of relatively $k$-free tuples of ideals}{Equidistribution of relatively k-free tuples of ideals}} 

In this section, we prove the main results stated in the introduction.
We begin by expressing the counting function for relatively $k$-free tuples in terms of the M\"obius function and establishing the auxiliary
estimates needed to control the resulting sums. We then prove
Theorem~\ref{thm:main-theorem}, which gives the asymptotic formula for relatively $k$-free tuples lying in prescribed ray classes. Finally,
we specialize this result to the case $r=1$ to obtain
Theorems~\ref{thm:k-free-ideals-in-ray-classes} and
\ref{thm:k-free-ideals-class-and-narrow-class-groups}.

Throughout this section we fix a number field $K$ of degree $d$ and a modulus $\mathfrak{m}$ for $K$.
We also let $\mathfrak{A} \in \mathrm{Cl}_K^{\mathfrak{m}}$ be a fixed ray class modulo $\mathfrak{m}$ and if $\mathfrak{f} \in \mathcal{F}_K^{\mathfrak{m}}$ is a fractional ideal coprime to $\mathfrak{m}_0$, we denote its ray class in $\Cl_K^{\mathfrak{m}}$ by $[\mathfrak{f}]$.
For an integral ideal $\mathfrak{c} \in \mathcal{I}_K^{\mathfrak{m}}$ we define
\begin{align}\label{eqn:ideals-divisible-by-c}
\mathcal{I}_K^{\mathfrak{m}}(X; \mathfrak{A}, \mathfrak{c}) := \left\{ \mathfrak{a} \in \mathcal{I}_K^{\mathfrak{m}} \suchthat \text{$\mathfrak{a} \in \mathfrak{A}$,  $\mathfrak{c} \mid \mathfrak{a}$  and $\Nrm{K/\Q}{\mathfrak{a}} \leq X$} \right\}
\end{align}
and its corresponding counting function
\begin{align*}
\mathcal{H}_K^{\mathfrak{m}}(X; \mathfrak{A}, \mathfrak{c}) &:= \# \mathcal{I}_K^{\mathfrak{m}}(X; \mathfrak{A}, \mathfrak{c}).
\end{align*}

\begin{lemma}\label{lem:number-of-ideals-in-a-ray-class-divisible-by-an-ideal}
Let $\mathfrak{c} \in \mathcal{I}_K^{\mathfrak{m}}$.
Then for any $X > 0$ there is a bijection
\begin{align*}
f \colon \mathcal{I}_K^{\mathfrak{m}} \left(\frac{X}{\Nrm{K/\Q}{\mathfrak{c}}}; \mathfrak{A} \cdot [\mathfrak{c}^{-1}] \right) \longrightarrow \mathcal{I}_K^{\mathfrak{m}}(X; \mathfrak{A}, \mathfrak{c}),
\end{align*}
defined by $f(\mathfrak{b}) := \mathfrak{c} \cdot \mathfrak{b}$.
In particular, 
\begin{align*}
\mathcal{H}_K^{\mathfrak{m}}(X; \mathfrak{A}, \mathfrak{c}) 
= \mathcal{H}_K^{\mathfrak{m}}\left( \dfrac{X}{\Nrm{K/\Q}{\mathfrak{c}}}; \mathfrak{A} \cdot[\mathfrak{c}^{-1}] \right).
\end{align*}
\end{lemma}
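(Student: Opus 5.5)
The plan is to check directly that $f$ is well defined, injective and surjective. Then we take cardinalities to get the identity for $\mathcal{H}_K^{\mathfrak{m}}$. The basic tools are unique factorization of ideals in $\mathcal{O}_K$, the multiplicativity of $\Nrm{K/\Q}{\cdot}$, and the fact that $\mathfrak{f}\mapsto[\mathfrak{f}]$ is a group homomorphism $\mathcal{F}_K^{\mathfrak{m}}\to\Cl_K^{\mathfrak{m}}$.

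\emph{Well-definedness.} Let $\mathfrak{b}\in\mathcal{I}_K^{\mathfrak{m}}\bigl(X/\Nrm{K/\Q}{\mathfrak{c}};\mathfrak{A}\cdot[\mathfrak{c}^{-1}]\bigr)$.
\begin{itemize}
\item Both $\mathfrak{b}$ and $\mathfrak{c}$ are integral and coprime to $\mathfrak{m}_0$. Hence $\mathfrak{c}\mathfrak{b}$ is integral, and $\operatorname{ord}_{\mathfrak{p}}(\mathfrak{c}\mathfrak{b})=\operatorname{ord}_{\mathfrak{p}}(\mathfrak{c})+\operatorname{ord}_{\mathfrak{p}}(\mathfrak{b})=0$ for every $\mathfrak{p}\mid\mathfrak{m}_0$, so $\mathfrak{c}\mathfrak{b}\in\mathcal{I}_K^{\mathfrak{m}}$.
\item Clearly $\mathfrak{c}\mid\mathfrak{c}\mathfrak{b}$.
\item $\Nrm{K/\Q}{\mathfrak{c}\mathfrak{b}}=\Nrm{K/\Q}{\mathfrak{c}}\,\Nrm{K/\Q}{\mathfrak{b}}\le X$.
\item Since $\mathfrak{c}^{-1}\in\mathcal{F}_K^{\mathfrak{m}}$, the class map being a homomorphism gives $[\mathfrak{c}\mathfrak{b}]=[\mathfrak{c}][\mathfrak{b}]=[\mathfrak{c}]\,\mathfrak{A}\,[\mathfrak{c}^{-1}]=\mathfrak{A}$.
\end{itemize}
So $f(\mathfrak{b})\in\mathcal{I}_K^{\mathfrak{m}}(X;\mathfrak{A},\mathfrak{c})$.

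\emph{Injectivity.} The ideals form a group inside $\mathcal{F}_K$. If $\mathfrak{c}\mathfrak{b}=\mathfrak{c}\mathfrak{b}'$, multiplying by $\mathfrak{c}^{-1}$ gives $\mathfrak{b}=\mathfrak{b}'$.

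\emph{Surjectivity.} Take $\mathfrak{a}\in\mathcal{I}_K^{\mathfrak{m}}(X;\mathfrak{A},\mathfrak{c})$ and set $\mathfrak{b}:=\mathfrak{c}^{-1}\mathfrak{a}$.
\begin{itemize}
\item Since $\mathfrak{c}\mid\mathfrak{a}$ in the Dedekind domain $\mathcal{O}_K$ (to divide is to contain), $\mathfrak{b}$ is an integral ideal.
\item $\operatorname{ord}_{\mathfrak{p}}(\mathfrak{b})=\operatorname{ord}_{\mathfrak{p}}(\mathfrak{a})-\operatorname{ord}_{\mathfrak{p}}(\mathfrak{c})=0$ for $\mathfrak{p}\mid\mathfrak{m}_0$, so $\mathfrak{b}\in\mathcal{I}_K^{\mathfrak{m}}$.
\item $\Nrm{K/\Q}{\mathfrak{b}}=\Nrm{K/\Q}{\mathfrak{a}}/\Nrm{K/\Q}{\mathfrak{c}}\le X/\Nrm{K/\Q}{\mathfrak{c}}$.
\item $[\mathfrak{b}]=[\mathfrak{c}^{-1}][\mathfrak{a}]=\mathfrak{A}\cdot[\mathfrak{c}^{-1}]$, using that $\Cl_K^{\mathfrak{m}}$ is abelian.
\end{itemize}
Then $f(\mathfrak{b})=\mathfrak{a}$.

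No step is a real obstacle. The only point needing care is the surjectivity step: the quotient $\mathfrak{c}^{-1}\mathfrak{a}$ must be integral, which follows from the divisibility-equals-containment property of Dedekind domains. It must also stay coprime to $\mathfrak{m}_0$, so that its ray class is defined; this is why the hypothesis $\mathfrak{c}\in\mathcal{I}_K^{\mathfrak{m}}$ is needed. Finally, both sets are finite, so the bijection gives the equality $\mathcal{H}_K^{\mathfrak{m}}(X;\mathfrak{A},\mathfrak{c})=\mathcal{H}_K^{\mathfrak{m}}\bigl(X/\Nrm{K/\Q}{\mathfrak{c}};\mathfrak{A}\cdot[\mathfrak{c}^{-1}]\bigr)$.
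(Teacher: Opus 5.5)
Your proposal is correct and follows essentially the same route as the paper: check that $f$ is well defined, get injectivity by cancelling $\mathfrak{c}$ in the group of fractional ideals, and get surjectivity via $\mathfrak{b}=\mathfrak{c}^{-1}\mathfrak{a}$. You also verify in detail the well-definedness and the integrality and coprimality of $\mathfrak{c}^{-1}\mathfrak{a}$, which the paper leaves as ``observe''. Your argument also holds when the sets are empty, so the paper's separate treatment of that case is not needed.
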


\begin{proof}
First, observe that the function $f$ is well-defined, and note that its domain is empty if and only if the codomain is empty. This occurs for sufficiently small $X$, in which case the lemma holds trivially. We may therefore assume that $X$ is sufficiently large so that both the domain and codomain of $f$ are nonempty.

To see that $f$ is injective note that if $f(\mathfrak{b}_1) = f(\mathfrak{b}_2)$ for some $\mathfrak{b}_1, \mathfrak{b}_2 \in \mathcal{I}_K^{\mathfrak{m}} \left(\frac{X}{\Nrm{K/\Q}{\mathfrak{c}}}; \mathfrak{A} \cdot [\mathfrak{c}^{-1}] \right)$, then $\mathfrak{c} \cdot \mathfrak{b}_1 = \mathfrak{c} \cdot \mathfrak{b}_2$ and this implies that $\mathfrak{b}_1 = \mathfrak{b}_2$. To see that $f$ is surjective, let $\mathfrak{a} \in \mathcal{I}_K^{\mathfrak{m}}(X; \mathfrak{A}, \mathfrak{c})$. Then $\mathfrak{c}^{-1} \cdot \mathfrak{a} \in \mathcal{I}_K^{\mathfrak{m}} \left(\frac{X}{\Nrm{K/\Q}{\mathfrak{c}}}; \mathfrak{A} \cdot [\mathfrak{c}^{-1}] \right)$ and we have $f(\mathfrak{c}^{-1} \cdot \mathfrak{a}) = \mathfrak{c} \cdot (\mathfrak{c}^{-1} \cdot \mathfrak{a}) = \mathfrak{a}$.
\end{proof}

We will need the following version of the Inclusion-Exclusion Principle.

\begin{proposition}[Inclusion-Exclusion Principle]\label{prop:inclusion-exclusion}
Let \( U \) be a finite set, and let \( \{ A_i \}_{i \in I} \) be a finite collection of subsets of \( U \).
Then the size of the complement of the union of the sets \( A_i \) is given by
\[
\left| U \smallsetminus \bigcup_{i \in I} A_i \right| 
= \sum_{J \subseteq I} (-1)^{|J|} \left| \bigcap_{j \in J} A_j \right|.
\]
\end{proposition}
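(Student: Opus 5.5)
The plan is induction on the size of the index set $I$, which is the standard argument for this identity. I will write $A_J := \bigcap_{j\in J} A_j$, with the convention $A_\emptyset = U$. With that convention the term $J=\emptyset$ contributes $|U|$, so for $I=\emptyset$ both sides equal $|U|$.

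For the inductive step, fix $i_0\in I$ and put $I' = I\smallsetminus\{i_0\}$ and $B = U\smallsetminus\bigcup_{i\in I'}A_i$. The set we want to count is $B\smallsetminus A_{i_0}$, so
\[
\Bigl|U\smallsetminus\bigcup_{i\in I}A_i\Bigr| = |B| - |B\cap A_{i_0}|.
\]
The intersection can be rewritten as
\[
B\cap A_{i_0} = A_{i_0}\smallsetminus\bigcup_{i\in I'}(A_i\cap A_{i_0}).
\]
This is exactly the original situation with ambient set $A_{i_0}$ and the family $\{A_i\cap A_{i_0}\}_{i\in I'}$, which has index set of size $|I|-1$. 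Applying the inductive hypothesis to both terms gives
\[
\sum_{J\subseteq I'}(-1)^{|J|}|A_J| \;-\; \sum_{J\subseteq I'}(-1)^{|J|}|A_{J\cup\{i_0\}}|.
\]
The first sum covers the subsets of $I$ not containing $i_0$. The second sum, with its minus sign, covers those containing $i_0$, and there $(-1)^{|J\cup\{i_0\}|} = -(-1)^{|J|}$, which is exactly the sign the formula requires. Together they give the full sum over $J\subseteq I$.

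An alternative is a direct double count. Write the right-hand side as $\sum_{x\in U}\sum_{J\subseteq S(x)}(-1)^{|J|}$, where $S(x) = \{i : x\in A_i\}$. The inner sum is $(1-1)^{|S(x)|}$, which equals $1$ if $S(x)=\emptyset$ and $0$ otherwise. Summing over $x$ then counts exactly the elements lying in no $A_i$.

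No step is a real obstacle. The only point needing care is the convention $A_\emptyset = U$, so that the case where an index set is empty is handled correctly when the inductive hypothesis is applied to the smaller family.
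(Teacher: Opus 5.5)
Your proof is correct. The paper itself gives no proof of this proposition: it quotes inclusion--exclusion as a standard fact and only records, in the remark that follows, the convention $\bigcap_{j\in\varnothing}A_j := U$, which is exactly the convention both of your arguments rely on.

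In the induction, the one thing to make explicit is that the inductive hypothesis is quantified over all finite ambient sets, since you apply it with ambient set $A_{i_0}$ rather than $U$. There, the empty intersection of the smaller family is $A_{i_0}=A_{\{i_0\}}$, which is precisely the term needed in the original sum.

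The double count is the more economical of your two routes and needs no such re-quantification. It rests on the fact that $x\in\bigcap_{j\in J}A_j$ exactly when $J\subseteq S(x)$, with $J=\varnothing$ included, which gives the inner sum $(1-1)^{|S(x)|}$.
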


\begin{remark}
We observe that in the previous proposition, the intersection over the empty set is defined to be the universal set, that is
\[
\bigcap_{j \in \varnothing} A_j := U.
\]
\end{remark}

Now, we recall that the Möbius function $\mu_K \colon \mathcal{I}_K \longrightarrow \{ 0, \pm 1 \}$ for the number field $K$ is defined for any nonzero integral $\mathfrak{a} \in \mathcal{I}_K$ by
\begin{align*}
\mu_K(\mathfrak{a}) = 
\begin{cases}1 & \text { if } \mathfrak{a} = \mathcal{O}_K, \\ 0 & \text { if } \mathfrak{p}^2 \mid \mathfrak{a} \text { for some prime ideal } \mathfrak{p}, \\ (-1)^t & \text{ if } \mathfrak{a} = \mathfrak{p}_1 \mathfrak{p}_2 \cdots \mathfrak{p}_t \text { for distinct primes } \mathfrak{p}_1, \mathfrak{p}_2, \ldots, \mathfrak{p}_t.
\end{cases}
\end{align*}

In the next proposition we use the Inclusion-Exclusion Principle stated above, the Möbius function $\mu_K$ and the formula from Lemma \ref{lem:number-of-ideals-in-a-ray-class-divisible-by-an-ideal} to obtain a formula for the counting function $Q_K^{k, r}(X; \mathfrak{A})$.

\begin{proposition}\label{prop:Q-identity-with-moebius}
Let $k,r\in\Z_{\geq 1}$ and let $\boldsymbol{\mathfrak{A}} = (\mathfrak{A}_1,\dots,\mathfrak{A}_r) \in \left(\mathrm{Cl}_K^{\mathfrak{m}}\right)^r$.
Then, for every $X>0$, we have
\begin{align*}
Q_K^{k,r}(X;\boldsymbol{\mathfrak{A}})
&=
\sum_{\mathfrak{a}\in
\mathcal{I}_K^{\mathfrak{m}}(\sqrt[k]{X})}
\mu_K(\mathfrak{a})
\prod_{i=1}^{r}
\mathcal{H}_K^{\mathfrak{m}}
\left(
\frac{X}{\Nrm{K/\Q}{\mathfrak{a}^k}};
\mathfrak{A}_i\cdot[\mathfrak{a}^{-k}]
\right).
\end{align*}
\end{proposition}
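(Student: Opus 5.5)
The plan is to apply the Inclusion--Exclusion Principle (Proposition~\ref{prop:inclusion-exclusion}) to the finite set
\[
U:=\prod_{i=1}^{r}\mathcal{I}_K^{\mathfrak{m}}(X;\mathfrak{A}_i),
\]
with the bad events indexed by primes. For each nonzero prime $\mathfrak{p}$, let $A_{\mathfrak{p}}\subseteq U$ be the set of tuples with $\mathfrak{p}^k\mid\mathfrak{a}_i$ for every $i$. Then $Q_K^{k,r}(X;\boldsymbol{\mathfrak{A}})=|U\smallsetminus\bigcup_{\mathfrak{p}}A_{\mathfrak{p}}|$.

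\textbf{Finiteness of the index set.} Every $\mathfrak{a}_i$ is coprime to $\mathfrak{m}_0$, so $A_{\mathfrak{p}}=\varnothing$ when $\mathfrak{p}\mid\mathfrak{m}_0$. Also $A_{\mathfrak{p}}=\varnothing$ unless $\Nrm{K/\Q}{\mathfrak{p}}^k\le X$. Hence only finitely many primes contribute: the index set $I$ consists of primes $\mathfrak{p}\nmid\mathfrak{m}_0$ with $\Nrm{K/\Q}{\mathfrak{p}}\le\sqrt[k]{X}$.

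\textbf{Computing the intersections.} For $J\subseteq I$, put $\mathfrak{a}_J:=\prod_{\mathfrak{p}\in J}\mathfrak{p}$, a squarefree ideal in $\mathcal{I}_K^{\mathfrak{m}}$. By unique factorization, $\mathfrak{p}^k\mid\mathfrak{a}_i$ for all $\mathfrak{p}\in J$ if and only if $\mathfrak{a}_J^k\mid\mathfrak{a}_i$. Thus
\[
\bigcap_{\mathfrak{p}\in J}A_{\mathfrak{p}}=\prod_{i=1}^{r}\mathcal{I}_K^{\mathfrak{m}}(X;\mathfrak{A}_i,\mathfrak{a}_J^k).
\]
Lemma~\ref{lem:number-of-ideals-in-a-ray-class-divisible-by-an-ideal}, applied with $\mathfrak{c}=\mathfrak{a}_J^k\in\mathcal{I}_K^{\mathfrak{m}}$, gives the cardinality
\[
\prod_{i=1}^{r}\mathcal{H}_K^{\mathfrak{m}}\bigl(X/\Nrm{K/\Q}{\mathfrak{a}_J^k};\mathfrak{A}_i\cdot[\mathfrak{a}_J^{-k}]\bigr).
\]
For $J=\varnothing$ we get $\mathfrak{a}_J=\mathcal{O}_K$, which recovers $|U|$. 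The sign $(-1)^{|J|}$ equals $\mu_K(\mathfrak{a}_J)$.

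\textbf{Reindexing by ideals.} The map $J\mapsto\mathfrak{a}_J$ is a bijection from subsets of $I$ onto the squarefree ideals built from primes in $I$. It remains to compare this range with the sum in the statement, over all $\mathfrak{a}\in\mathcal{I}_K^{\mathfrak{m}}(\sqrt[k]{X})$.
\begin{itemize}
\item Non-squarefree $\mathfrak{a}$ contribute $0$, since $\mu_K(\mathfrak{a})=0$.
\item Squarefree $\mathfrak{a}$ that are coprime to $\mathfrak{m}_0$ but not of the form $\mathfrak{a}_J$ contribute nothing whether or not they are included: the set they index is empty, because its points would require $\Nrm{K/\Q}{\mathfrak{a}}^k\le X$.
\end{itemize}
Conversely, it is cleanest to take $I$ to be all primes $\mathfrak{p}\nmid\mathfrak{m}_0$ with $\Nrm{K/\Q}{\mathfrak{p}}\le\sqrt[k]{X}$. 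Then any $\mathfrak{a}_J$ with $\Nrm{K/\Q}{\mathfrak{a}_J}>\sqrt[k]{X}$ indexes an empty intersection and contributes $0$. Thus the two sums agree term by term on nonzero terms, and the identity follows.

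There is no real obstacle. The only care needed is this bookkeeping of empty sets and vanishing terms, which makes the inclusion--exclusion sum over subsets match exactly the sum over $\mathcal{I}_K^{\mathfrak{m}}(\sqrt[k]{X})$.
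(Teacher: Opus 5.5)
Your proposal is correct and matches the paper's proof step for step. Both apply inclusion--exclusion on $U=\prod_{i}\mathcal{I}_K^{\mathfrak{m}}(X;\mathfrak{A}_i)$ over the primes $\mathfrak{p}\nmid\mathfrak{m}_0$ with $\Nrm{K/\Q}{\mathfrak{p}}\le\sqrt[k]{X}$, count the intersections via Lemma~\ref{lem:number-of-ideals-in-a-ray-class-divisible-by-an-ideal} with $\mathfrak{c}=\mathfrak{a}_J^k$, use $(-1)^{|J|}=\mu_K(\mathfrak{a}_J)$, and reindex by squarefree ideals, dropping those of norm exceeding $\sqrt[k]{X}$. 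One small remark: your second bullet is vacuous, since with your choice of $I$ every squarefree $\mathfrak{a}\in\mathcal{I}_K^{\mathfrak{m}}(\sqrt[k]{X})$ has all its prime factors in $I$ and so is automatically of the form $\mathfrak{a}_J$.
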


\begin{proof}
We apply the Inclusion--Exclusion Principle to count the relatively
$k$-free $r$-tuples whose $i$-th entry lies in the ray class
$\mathfrak{A}_i$. Let
\[
U
:=
\prod_{i=1}^{r}
\mathcal{I}_K^{\mathfrak{m}}(X;\mathfrak{A}_i)
\]
be the set of all $r$-tuples
$(\mathfrak{a}_1,\dots,\mathfrak{a}_r)$ of integral ideals such that
$\mathfrak{a}_i\in\mathfrak{A}_i$ and
$\Nrm{K/\Q}{\mathfrak{a}_i}\leq X$ for every
$1\leq i\leq r$. We also let
\[
I
:=
\mathbb{P}_K(\sqrt[k]{X};\mathfrak{m})
\]
be the set of all prime ideals $\mathfrak{p}$ of $K$ such that
$\Nrm{K/\Q}{\mathfrak{p}}\leq \sqrt[k]{X}$ and
$\mathfrak{p}\nmid\mathfrak{m}_0$. Then, using the notation from equation \eqref{eqn:ideals-divisible-by-c}, for every $\mathfrak{p}\in I$,
define
\[
A_{\mathfrak{p}}
:=
\prod_{i=1}^{r}
\mathcal{I}_K^{\mathfrak{m}}
(X;\mathfrak{A}_i,\mathfrak{p}^k).
\]
Thus, $A_{\mathfrak{p}}$ is the set of all tuples
$(\mathfrak{a}_1,\dots,\mathfrak{a}_r)\in U$ such that
$\mathfrak{p}^k\mid\mathfrak{a}_i$ for every $1\leq i\leq r$.

It follows that
\[
U\smallsetminus\bigcup_{\mathfrak{p}\in I}A_{\mathfrak{p}}
\]
is precisely the set of relatively $k$-free tuples
\[
(\mathfrak{a}_1,\dots,\mathfrak{a}_r)
\in
\prod_{i=1}^{r}
\mathcal{I}_K^{\mathfrak{m}}(X;\mathfrak{A}_i).
\]
Therefore,
\begin{align}\label{eqn:Q-tuples-complement-formula}
Q_K^{k,r}(X;\boldsymbol{\mathfrak{A}})
=
\left|
U\smallsetminus
\bigcup_{\mathfrak{p}\in I}A_{\mathfrak{p}}
\right|.
\end{align}

For every subset $J\subseteq I$, define
\[
\mathfrak{c}_J
:=
\prod_{\mathfrak{p}\in J}\mathfrak{p},
\]
where $\mathfrak{c}_{\varnothing}:=\mathcal{O}_K$. Then
\begin{align*}
\bigcap_{\mathfrak{p}\in J}A_{\mathfrak{p}}
&=
\prod_{i=1}^{r}
\mathcal{I}_K^{\mathfrak{m}}
(X;\mathfrak{A}_i,\mathfrak{c}_J^k),
\end{align*}
since a tuple belongs to this intersection if and only if each of its
entries is divisible by $\mathfrak{p}^k$ for every $\mathfrak{p}\in J$,
or equivalently, by $\mathfrak{c}_J^k$. Hence,
Lemma~\ref{lem:number-of-ideals-in-a-ray-class-divisible-by-an-ideal}
implies that
\begin{align}\label{eqn:intersection-Ap-size}
\left|
\bigcap_{\mathfrak{p}\in J}A_{\mathfrak{p}}
\right|
&=
\prod_{i=1}^{r}
\mathcal{H}_K^{\mathfrak{m}}
\left(
\frac{X}{\Nrm{K/\Q}{\mathfrak{c}_J^k}};
\mathfrak{A}_i\cdot[\mathfrak{c}_J^{-k}]
\right).
\end{align}

Now, observe that for every $J\subseteq I$ we have $\mu_K(\mathfrak{c}_J)=(-1)^{|J|}$. Thus, by the Inclusion--Exclusion Principle from
Proposition~\ref{prop:inclusion-exclusion} and
equations~\eqref{eqn:Q-tuples-complement-formula}
and~\eqref{eqn:intersection-Ap-size}, we obtain
\begin{align*}
Q_K^{k,r}(X;\boldsymbol{\mathfrak{A}})
&=
\left|
U\smallsetminus
\bigcup_{\mathfrak{p}\in I}A_{\mathfrak{p}}
\right|
=
\sum_{J\subseteq I}
(-1)^{|J|}
\left|
\bigcap_{\mathfrak{p}\in J}A_{\mathfrak{p}}
\right|\\
&=
\sum_{J\subseteq I}
(-1)^{|J|}
\prod_{i=1}^{r}
\mathcal{H}_K^{\mathfrak{m}}
\left(
\frac{X}{\Nrm{K/\Q}{\mathfrak{c}_J^k}};
\mathfrak{A}_i\cdot[\mathfrak{c}_J^{-k}]
\right).
\end{align*}

The map $J\mapsto\mathfrak{c}_J$ identifies the subsets of $I$ with
the squarefree ideals all of whose prime ideal divisors belong to $I$.
Moreover, if
$\Nrm{K/\Q}{\mathfrak{c}_J}>\sqrt[k]{X}$, then
\[
\frac{X}{\Nrm{K/\Q}{\mathfrak{c}_J^k}}<1,
\]
so the corresponding term vanishes. Finally,
$\mu_K(\mathfrak{a})=0$ for every nonsquarefree ideal $\mathfrak{a}$.
Consequently, the preceding sum may be written as
\begin{align*}
Q_K^{k,r}(X;\boldsymbol{\mathfrak{A}})
&=
\sum_{\mathfrak{a}\in
\mathcal{I}_K^{\mathfrak{m}}(\sqrt[k]{X})}
\mu_K(\mathfrak{a})
\prod_{i=1}^{r}
\mathcal{H}_K^{\mathfrak{m}}
\left(
\frac{X}{\Nrm{K/\Q}{\mathfrak{a}^k}};
\mathfrak{A}_i\cdot[\mathfrak{a}^{-k}]
\right),
\end{align*}
which proves the result.
\end{proof}

\begin{lemma}\label{lem:several-cases-estimation}
Let $s\geq 0$ and $T > 1$. Then
\begin{align*}
\sum_{\mathfrak{a}\in\mathcal{I}_K^{\mathfrak{m}}(T)}
\frac{1}{\Nrm{K/\Q}{\mathfrak{a}}^s}
=
\begin{cases}
O(T^{1-s}), & \text{if $0\leq s<1$},\\[4pt]
O(\log T), & \text{if $s=1$},\\[4pt]
O(1), & \text{if $s>1$}.
\end{cases}
\end{align*}
\end{lemma}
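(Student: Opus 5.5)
The plan is to apply partial (Abel/Riemann--Stieltjes) summation to the counting function $\mathcal{H}_K^{\mathfrak{m}}(t)=\#\mathcal{I}_K^{\mathfrak{m}}(t)$. The only input needed is a crude bound $\mathcal{H}_K^{\mathfrak{m}}(t)=O(t)$ for $t\geq 1$. This follows from \eqref{eqn:ideal-counting-asymptotic-for-ideals-prime-to-m} in Theorem~\ref{thm:Lang-asymptotic-formula}. Alternatively, since $\mathcal{H}_K^{\mathfrak{m}}(t)\leq\mathcal{H}_K(t)$, it also follows from the Dedekind--Weber estimate \eqref{eqn:dedekind-weber}. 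Either way there is a constant $C$, depending on $K$ and $\mathfrak{m}$, with $\mathcal{H}_K^{\mathfrak{m}}(t)\leq Ct$ for all $t\geq1$. The function vanishes for $t<1$, because every integral ideal has norm at least $1$.

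Write the sum as a Stieltjes integral:
\[
\sum_{\mathfrak{a}\in\mathcal{I}_K^{\mathfrak{m}}(T)}\frac{1}{\Nrm{K/\Q}{\mathfrak{a}}^s}
=\int_{1^-}^{T} t^{-s}\,d\mathcal{H}_K^{\mathfrak{m}}(t)
=\frac{\mathcal{H}_K^{\mathfrak{m}}(T)}{T^{s}}+s\int_{1}^{T}\frac{\mathcal{H}_K^{\mathfrak{m}}(t)}{t^{s+1}}\,dt .
\]
The boundary term at $1^-$ vanishes. Inserting $\mathcal{H}_K^{\mathfrak{m}}(t)\leq Ct$ bounds the right-hand side by
\[
CT^{1-s}+sC\int_1^T t^{-s}\,dt .
\]
Now split into the three cases.

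\textbf{Case $0\leq s<1$.} The integral equals $(T^{1-s}-1)/(1-s)$, so the total is $O(T^{1-s})$. The implied constant depends on $s$, which is fixed.

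\textbf{Case $s=1$.} The total is at most $C+C\log T$. Since $T>1$ is unbounded, this is $O(\log T)$ for $T$ bounded away from $1$. For $T$ near $1$ the sum is just the term of $\mathfrak{a}=\mathcal{O}_K$, plus possibly finitely many ideals of norm $\leq 2$. This needs a small remark: read the estimate as holding for $T\geq2$, or note that the bounded range contributes $O(1)$, which is absorbed.

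\textbf{Case $s>1$.} The integral is at most $1/(s-1)$ and $T^{1-s}\leq1$, so the total is $O(1)$.

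A cleaner alternative for $s>1$ is to bound the sum by the convergent series $\zeta_K(s;\mathfrak{m})\leq\zeta_K(s)$, using \eqref{eqn:dedekind-zeta-and-zeta-modulo}.

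There is no real obstacle here. The only points needing care are:
\begin{itemize}
\item getting the linear upper bound for $\mathcal{H}_K^{\mathfrak{m}}$ uniformly for all $t\geq1$, not just asymptotically;
\item the degenerate behaviour of $\log T$ as $T\to1^+$ in the case $s=1$, handled as above.
\end{itemize}
For the first point, the asymptotic formula gives $\mathcal{H}_K^{\mathfrak{m}}(t)\leq Ct$ for $t\geq t_0$. On the compact range $[1,t_0]$ the function is bounded, so enlarging $C$ gives the bound for all $t\geq1$.
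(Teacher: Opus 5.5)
Your proof is correct and takes the same route as the paper: partial summation against $\mathcal{H}_K^{\mathfrak{m}}(t)$ using only the linear bound $\mathcal{H}_K^{\mathfrak{m}}(t)=O(t)$, then the same three-case evaluation of $\int_1^T t^{-s}\,dt$. Your two extra remarks, making the linear bound uniform for all $t\geq1$ and noting that for $s=1$ the bound $1+\log T$ is $O(\log T)$ only for $T$ bounded away from $1$, address points the paper passes over silently; they are harmless because the lemma is applied with $T=X^{1/k}\to\infty$.
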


\begin{proof}
For each $n\geq1$, let
\[
a_n
:=
\#\left\{
\mathfrak{a}\in\mathcal{I}_K^{\mathfrak{m}}
\suchthat
\Nrm{K/\Q}{\mathfrak{a}}=n
\right\}.
\]
Then its summatory function is
\[
A(x)
:=
\sum_{n\leq x}a_n
=
\mathcal{H}_K^{\mathfrak{m}}(x).
\]
By the ideal-counting asymptotic formula,
\[
\mathcal{H}_K^{\mathfrak{m}}(x)
=
\rho_K^{\mathfrak{m}}x
+
O\left(x^{1-\delta}\right)
\]
for some $\delta>0$, and in particular
\begin{equation}\label{eqn:H-linear-bound}
A(x)=\mathcal{H}_K^{\mathfrak{m}}(x)=O(x).
\end{equation}

We now apply partial summation (see for example \cite[\S 1.5]{iwaniec-kowalski2004} or \cite[Chapter 1]{montgomery-vaughan2007}). Recall that if
\[
A(x)=\sum_{n\leq x}a_n
\]
and $f$ is continuously differentiable, then
\begin{equation}\label{eqn:partial-summation-formula}
\sum_{n\leq T}a_n f(n)
=
A(T)f(T)
-
\int_1^T A(t)f'(t)\,dt.
\end{equation}

For $s>0$, we take $f(t)=t^{-s}$, so that $f'(t)=-s\,t^{-s-1}$.
Since
\[
\sum_{\mathfrak{a}\in\mathcal{I}_K^{\mathfrak{m}}(T)}
\frac{1}{\Nrm{K/\Q}{\mathfrak{a}}^s}
=
\sum_{n\leq T}\frac{a_n}{n^s},
\]
formula \eqref{eqn:partial-summation-formula} gives
\begin{align}
\sum_{\mathfrak{a}\in\mathcal{I}_K^{\mathfrak{m}}(T)}
\frac{1}{\Nrm{K/\Q}{\mathfrak{a}}^s}
&=
\frac{\mathcal{H}_K^{\mathfrak{m}}(T)}{T^s}
+
s\int_1^T
\frac{\mathcal{H}_K^{\mathfrak{m}}(t)}
{t^{s+1}}\,dt.
\label{eqn:partial-summation-ideal-sum}
\end{align}
Using \eqref{eqn:H-linear-bound}, we obtain
\begin{align*}
\sum_{\mathfrak{a}\in\mathcal{I}_K^{\mathfrak{m}}(T)}
\frac{1}{\Nrm{K/\Q}{\mathfrak{a}}^s}
&\ll
T^{1-s}
+
\int_1^T t^{-s}\,dt.
\end{align*}

We now consider the three cases. If $0<s<1$, then
\[
\int_1^T t^{-s}\,dt
=
\frac{T^{1-s}-1}{1-s}
=
O(T^{1-s}),
\]
and therefore
\[
\sum_{\mathfrak{a}\in\mathcal{I}_K^{\mathfrak{m}}(T)}
\frac{1}{\Nrm{K/\Q}{\mathfrak{a}}^s}
=
O(T^{1-s}).
\]
If $s=1$, then
\[
\int_1^T\frac{dt}{t}=\log T,
\]
and hence the sum is $O(\log T)$. Finally, if $s>1$, then
\[
\int_1^T t^{-s}\,dt
\leq
\int_1^\infty t^{-s}\,dt
=
\frac{1}{s-1},
\]
while $T^{1-s}=O(1)$, and therefore the sum is $O(1)$. Finally, for $s=0$, the result follows directly from
$$
\sum_{\mathfrak{a}\in\mathcal{I}_K^{\mathfrak{m}}(T)}1
=
\mathcal{H}_K^{\mathfrak{m}}(T)
=
O(T).
$$
\end{proof}

\begin{lemma}\label{lem:finite-sum-with-moebius-to-dedekind-zeta}
Let $s>1$ and $T\geq 1$. Then
\begin{align*}
\sum_{\mathfrak{a}\in\mathcal{I}_K^{\mathfrak{m}}(T)}
\frac{\mu_K(\mathfrak{a})}
{\Nrm{K/\Q}{\mathfrak{a}}^s}
&=
\frac{1}{\zeta_K(s)}
\prod_{\mathfrak{p}\mid\mathfrak{m}_0}
\left(
1-\frac{1}{\Nrm{K/\Q}{\mathfrak{p}}^s}
\right)^{-1}
+
O(T^{1-s}).
\end{align*}
\end{lemma}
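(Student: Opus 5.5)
The plan is to complete the finite sum to the full Dirichlet series over $\mathcal{I}_K^{\mathfrak{m}}$ and then bound the tail.

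\emph{Step 1: the full series.} For $s>1$ the series
\[
\sum_{\mathfrak{a}\in\mathcal{I}_K^{\mathfrak{m}}}\mu_K(\mathfrak{a})\Nrm{K/\Q}{\mathfrak{a}}^{-s}
\]
converges absolutely. This follows from Lemma~\ref{lem:several-cases-estimation} in the case $s>1$: the partial sums of $\sum\Nrm{K/\Q}{\mathfrak{a}}^{-s}$ over $\mathcal{I}_K^{\mathfrak{m}}(T)$ are $O(1)$ uniformly in $T$.

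Since $\mu_K$ is multiplicative and $\mathcal{I}_K^{\mathfrak{m}}$ consists of the ideals supported on primes $\mathfrak{p}\nmid\mathfrak{m}_0$, the series has the Euler product
\[
\prod_{\mathfrak{p}\nmid\mathfrak{m}_0}\left(1-\Nrm{K/\Q}{\mathfrak{p}}^{-s}\right).
\]
By \eqref{eqn:dedekind-zeta-and-zeta-modulo}, this product equals
\[
\zeta_K(s;\mathfrak{m})^{-1}=\frac{1}{\zeta_K(s)}\prod_{\mathfrak{p}\mid\mathfrak{m}_0}\left(1-\Nrm{K/\Q}{\mathfrak{p}}^{-s}\right)^{-1}.
\]
Absolute convergence justifies the product expansion, in the same standard way as for $\zeta_K$ itself.

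\emph{Step 2: the tail.} It remains to show that the tail satisfies
\[
\Big|\sum_{\mathfrak{a}\in\mathcal{I}_K^{\mathfrak{m}},\ \Nrm{K/\Q}{\mathfrak{a}}>T}\mu_K(\mathfrak{a})\Nrm{K/\Q}{\mathfrak{a}}^{-s}\Big|\le\sum_{n>T}a_n n^{-s}=O(T^{1-s}),
\]
where $a_n$ is as in the proof of Lemma~\ref{lem:several-cases-estimation}. I will use partial summation with $A(x)=\mathcal{H}_K^{\mathfrak{m}}(x)=O(x)$ from \eqref{eqn:H-linear-bound}. For $U>T$,
\[
\sum_{T<n\le U}a_n n^{-s}=A(U)U^{-s}-A(T)T^{-s}+s\int_T^U A(t)t^{-s-1}\,dt.
\]
Letting $U\to\infty$, the boundary term $A(U)U^{-s}$ tends to $0$ because $s>1$. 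The remaining terms give
\[
\ll T^{1-s}+\int_T^\infty t^{-s}\,dt\ll T^{1-s}.
\]

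\emph{Expected obstacle.} The main point needing care is uniformity at small $T$. For $1\le T<2$ the finite sum is just $1$, and the claimed bound $O(T^{1-s})=O(1)$ holds trivially since both sides are bounded. I also need to be careful that the implied constant depends only on $K$, $\mathfrak{m}$ and $s$, which the argument above respects.
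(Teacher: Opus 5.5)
Your proposal is correct and follows essentially the same route as the paper: identify the full series with $1/\zeta_K(s;\mathfrak{m})$ via the Euler product and \eqref{eqn:dedekind-zeta-and-zeta-modulo}, then bound the tail $\sum_{\Nrm{K/\Q}{\mathfrak{a}}>T}\Nrm{K/\Q}{\mathfrak{a}}^{-s}$ by partial summation using $\mathcal{H}_K^{\mathfrak{m}}(t)=O(t)$. You handle the boundary term as $U\to\infty$ and the range of small $T$ more explicitly than the paper does, but the argument is the same.
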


\begin{proof}
Since $s>1$, the series is absolutely convergent and
\begin{align*}
\sum_{\mathfrak{a}\in\mathcal{I}_K^{\mathfrak{m}}}
\frac{\mu_K(\mathfrak{a})}
{\Nrm{K/\Q}{\mathfrak{a}}^s}
&=
\frac{1}{\zeta_K(s;\mathfrak{m})}\\
&=
\frac{1}{\zeta_K(s)}
\prod_{\mathfrak{p}\mid\mathfrak{m}_0}
\left(
1-\frac{1}{\Nrm{K/\Q}{\mathfrak{p}}^s}
\right)^{-1}.
\end{align*}
Thus, we have
\begin{align*}
\left|
\sum_{\mathfrak{a}\in\mathcal{I}_K^{\mathfrak{m}}(T)}
\frac{\mu_K(\mathfrak{a})}
{\Nrm{K/\Q}{\mathfrak{a}}^s}
- \frac{1}{\zeta_K(s,\mathfrak{m})} \right|
\leq
\sum_{\substack{\mathfrak{a}\in\mathcal{I}_K^{\mathfrak{m}}\\
\Nrm{K/\Q}{\mathfrak{a}}>T}}
\frac{1}{\Nrm{K/\Q}{\mathfrak{a}}^s}.
\end{align*}

Now, to estimate this tail, we apply again partial summation as in the proof of Lemma \ref{lem:several-cases-estimation} and the bound $\mathcal{H}_K^{\mathfrak{m}}(X) =  O(X)$, and obtain
\begin{align*}
\sum_{\substack{\mathfrak{a}\in\mathcal{I}_K^{\mathfrak{m}}\\
\Nrm{K/\Q}{\mathfrak{a}}>T}}
\frac{1}{\Nrm{K/\Q}{\mathfrak{a}}^s}
&\ll
\frac{\mathcal{H}_K^{\mathfrak{m}}(T)}{T^s}
+
\int_T^\infty
\frac{\mathcal{H}_K^{\mathfrak{m}}(t)}{t^{s+1}}\,dt\\
&\ll
T^{1-s}
+
\int_T^\infty t^{-s}\,dt\\
&=
O(T^{1-s}),
\end{align*}
since $s>1$. This proves the result.
\end{proof}

We are now ready to prove Theorem \ref{thm:main-theorem}. Let
\[
\mathfrak{m}_{\infty}^{\mathrm{full}}
:=
\prod_{\substack{v\mid\infty\\ v\text{ real}}} v
\]
denote the full infinite modulus of $K$. Thus, the moduli
$\mathfrak{m}=(1)$ and $\mathfrak{m}=\mathfrak{m}_{\infty}^{\mathrm{full}}$ correspond, respectively, to the ordinary and narrow ideal class groups.

For a modulus $\mathfrak{m}$ of $K$, define
\begin{equation}
\delta_d(\mathfrak{m})
:=
\begin{cases}
\displaystyle \frac{2}{d+1},
&
\text{if $\mathfrak{m}=(1)$ or
$\mathfrak{m}=\mathfrak{m}_{\infty}^{\mathrm{full}}$},
\\[8pt]
\displaystyle \frac{1}{d},
&
\text{otherwise}.
\end{cases}
\end{equation}
With this notation, the ideal-counting estimates used below can be written uniformly as
\begin{equation}
\label{eqn:unified-ideal-counting}
\mathcal{H}_K^{\mathfrak{m}}(X;\mathfrak{A})
=
\rho_K^{\mathfrak{m}}X
+
O\left(
X^{1-\delta_d(\mathfrak{m})}
\right).
\end{equation}

\begin{theorem}\label{thm:main-theorem-tuples}
Let $K$ be a number field of degree $[K:\Q]=d$, let
$\mathfrak{m}$ be a modulus for $K$, and let $k,r\in\Z_{\geq1}$
be such that $(k,r)\neq(1,1)$. Let
\[
\boldsymbol{\mathfrak{A}}
=
(\mathfrak{A}_1,\ldots,\mathfrak{A}_r)
\in
\left(\mathrm{Cl}_K^{\mathfrak{m}}\right)^r.
\]
Then
\begin{align}
Q_K^{k,r}(X;\boldsymbol{\mathfrak{A}})
&=
\frac{\left(\rho_K^{\mathfrak{m}}\right)^r}
{\zeta_K(rk)}
\prod_{\mathfrak{p}\mid\mathfrak{m}_0}
\left(
1-\frac{1}
{\Nrm{K/\Q}{\mathfrak{p}}^{rk}}
\right)^{-1}
X^r
+
E_{K,\mathfrak{m},k,r}(X),
\label{eqn:main-theorem-tuples-asymptotic}
\end{align}
where
\begin{equation}\label{eqn:main-theorem-tuples-error}
E_{K,\mathfrak{m},k,r}(X)
=
\begin{cases}
\displaystyle
O\left(X^{1/k}\right),
&
k\left(r-\delta_d(\mathfrak{m})\right)<1,
\\[8pt]
\displaystyle
O\left(
X^{r-\delta_d(\mathfrak{m})}\log X
\right),
&
k\left(r-\delta_d(\mathfrak{m})\right)=1,
\\[8pt]
\displaystyle
O\left(
X^{r-\delta_d(\mathfrak{m})}
\right),
&
k\left(r-\delta_d(\mathfrak{m})\right)>1.
\end{cases}
\end{equation}

\end{theorem}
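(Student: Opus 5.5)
Starting from Proposition~\ref{prop:Q-identity-with-moebius}, write $Y=X/\Nrm{K/\Q}{\mathfrak{a}}^k$ for $\mathfrak{a}\in\mathcal{I}_K^{\mathfrak{m}}(\sqrt[k]{X})$, so that $Y\ge 1$. By \eqref{eqn:unified-ideal-counting}, applied to the class $\mathfrak{A}_i\cdot[\mathfrak{a}^{-k}]$ (the implied constant is uniform, since there are only finitely many classes), each factor equals $\rho_K^{\mathfrak{m}}Y+O(Y^{1-\delta})$ with $\delta=\delta_d(\mathfrak{m})$. Since $Y\ge1$ and $1-\delta<1$, each factor is $O(Y)$. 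Expanding the product of $r$ such factors gives
\[
\prod_{i=1}^r \mathcal{H}_K^{\mathfrak{m}}\bigl(Y;\mathfrak{A}_i[\mathfrak{a}^{-k}]\bigr)=(\rho_K^{\mathfrak{m}})^rY^r+O\bigl(Y^{r-\delta}\bigr),
\]
because every cross term is bounded by $Y^{r-1}\cdot Y^{1-\delta}$ or by something smaller. (Here $Y^{r-\delta}\ge Y^{r-1}$ when $Y\ge 1$, and the case $r=1$ is trivial.) Summing against $\mu_K(\mathfrak{a})$ splits $Q_K^{k,r}(X;\boldsymbol{\mathfrak{A}})$ into a main part $(\rho_K^{\mathfrak{m}})^rX^r\sum_{\mathfrak{a}}\mu_K(\mathfrak{a})\Nrm{K/\Q}{\mathfrak{a}}^{-rk}$ and an error part $O\bigl(X^{r-\delta}\sum_{\mathfrak{a}\in\mathcal{I}_K^{\mathfrak{m}}(X^{1/k})}\Nrm{K/\Q}{\mathfrak{a}}^{-k(r-\delta)}\bigr)$.

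For the main part, apply Lemma~\ref{lem:finite-sum-with-moebius-to-dedekind-zeta} with $s=rk>1$ and $T=X^{1/k}$. This produces the stated main term plus $O(X^r\cdot X^{(1-rk)/k})=O(X^{1/k})$. For the error part, apply Lemma~\ref{lem:several-cases-estimation} with $s=k(r-\delta)\ge0$ and $T=X^{1/k}$ (note $r-\delta>0$).
\begin{itemize}
\item If $s<1$, the sum is $O(X^{(1-s)/k})$, so the error part is $O(X^{r-\delta+1/k-(r-\delta)})=O(X^{1/k})$.
\item If $s=1$, the sum is $O(\log X^{1/k})$, so the error part is $O(X^{r-\delta}\log X)$.
\item If $s>1$, the sum is $O(1)$, so the error part is $O(X^{r-\delta})$.
\end{itemize}
Combining the two parts, the $O(X^{1/k})$ coming from the main-term tail must be absorbed in each case. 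When $s\ge1$ we have $r-\delta\ge 1/k$, so $X^{1/k}\le X^{r-\delta}$. This gives exactly the three cases of \eqref{eqn:main-theorem-tuples-error}. For $X$ near $1$, or when $T\le1$, every quantity is bounded, so the estimates hold trivially.

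The main obstacle is controlling the product expansion uniformly in $\mathfrak{a}$. This requires the $O$-constant in \eqref{eqn:unified-ideal-counting} to be independent of the ray class, which holds because $\mathrm{Cl}_K^{\mathfrak{m}}$ is finite, and it requires the bound $\mathcal{H}=O(Y)$ only for $Y\ge1$. That is precisely why the sum is restricted to $\Nrm{K/\Q}{\mathfrak{a}}\le X^{1/k}$. Some care is also needed for large $k$ with $r=1$: there $s=k(1-\delta)$ may exceed $1$, and the tail term $X^{1/k}$ is then dominated, as verified above.
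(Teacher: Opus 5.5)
Your proposal is correct and follows essentially the same route as the paper: the M\"obius identity of Proposition~\ref{prop:Q-identity-with-moebius}, the uniform product expansion $(\rho_K^{\mathfrak{m}})^rY^r+O(Y^{r-\delta})$, Lemma~\ref{lem:finite-sum-with-moebius-to-dedekind-zeta} with $s=rk$ and $T=X^{1/k}$ for the main term, Lemma~\ref{lem:several-cases-estimation} with $s=k(r-\delta)$ for the three error cases, and finally absorbing the $O(X^{1/k})$ tail. One harmless slip: $r-\delta$ can equal $0$ (when $d=r=1$, since then $\delta=1$), but you only need $s\geq 0$, which Lemma~\ref{lem:several-cases-estimation} covers.
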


\begin{proof}
For simplicity, write
\[
\delta:=\delta_d(\mathfrak{m})
\qquad\text{and}\qquad
\mathcal{J}
=
\mathcal{I}_K^{\mathfrak{m}}\left(X^{1/k}\right).
\]
By Proposition~\ref{prop:Q-identity-with-moebius}, we have
\begin{align}\label{eqn:Q-equality-with-moebius-for-proof}
Q_K^{k,r}(X;\boldsymbol{\mathfrak{A}})
&=
\sum_{\mathfrak{a}\in\mathcal{J}}
\mu_K(\mathfrak{a})
\prod_{i=1}^{r}
\mathcal{H}_K^{\mathfrak{m}}
\left(
\frac{X}{\Nrm{K/\Q}{\mathfrak{a}^k}};
\mathfrak{A}_i\cdot[\mathfrak{a}^{-k}]
\right).
\end{align}

Since the ray class group $\mathrm{Cl}_K^{\mathfrak{m}}$ is finite,
the implied constant in \eqref{eqn:unified-ideal-counting} may be
chosen uniformly over all ray classes. Hence, for any
$\mathfrak{C}_1,\ldots,\mathfrak{C}_r
\in\mathrm{Cl}_K^{\mathfrak{m}}$ and any $Y\geq1$,
\begin{align}
\prod_{i=1}^{r}
\mathcal{H}_K^{\mathfrak{m}}(Y;\mathfrak{C}_i)
&=
\prod_{i=1}^{r}
\left(
\rho_K^{\mathfrak{m}}Y
+
O\left(Y^{1-\delta}\right)
\right)
\notag\\
&=
\left(\rho_K^{\mathfrak{m}}\right)^rY^r
+
O\left(Y^{r-\delta}\right).
\label{eqn:product-H-asymptotic-unified}
\end{align}
Indeed, every term other than the main product has the form
\[
\left(\rho_K^{\mathfrak{m}}Y\right)^t
O\left(Y^{1-\delta}\right)^{r-t}
\]
for some $0\leq t\leq r-1$, and hence has order
\[
O\left(
Y^{t+(r-t)(1-\delta)}
\right)
=
O\left(
Y^{r-(r-t)\delta}
\right).
\]
The largest exponent occurs for $t=r-1$, giving
$O(Y^{r-\delta})$.

Next, applying \eqref{eqn:product-H-asymptotic-unified} with the choices
\[
Y
=
\frac{X}{\Nrm{K/\Q}{\mathfrak{a}^k}} \quad \text{and} \quad \mathfrak{C}_i
=
\mathfrak{A}_i\cdot[\mathfrak{a}^{-k}]
\]
we obtain
\begin{align}\label{eqn:product-H-asymptotic-substitution}
\prod_{i=1}^{r}
\mathcal{H}_K^{\mathfrak{m}}
\left(
\frac{X}{\Nrm{K/\Q}{\mathfrak{a}^k}};
\mathfrak{A}_i\cdot[\mathfrak{a}^{-k}]
\right) =
\left(\rho_K^{\mathfrak{m}}\right)^r
\frac{X^r}
{\Nrm{K/\Q}{\mathfrak{a}}^{rk}}
+
O\left(
\frac{X^{r-\delta}}
{\Nrm{K/\Q}{\mathfrak{a}}^{k(r-\delta)}}
\right).
\end{align}
Then, using \eqref{eqn:product-H-asymptotic-substitution} in
\eqref{eqn:Q-equality-with-moebius-for-proof} yields
\begin{align}
Q_K^{k,r}(X;\boldsymbol{\mathfrak{A}})
&=
\left(\rho_K^{\mathfrak{m}}\right)^r
\left( \sum_{\mathfrak{a}\in\mathcal{J}}
\frac{\mu_K(\mathfrak{a})}
{\Nrm{K/\Q}{\mathfrak{a}}^{rk}}
\right)X^r + O\left( \sum_{\mathfrak{a}\in\mathcal{J}}
\frac{1}{\Nrm{K/\Q}{\mathfrak{a}}^{k(r-\delta)}}\right)
X^{r-\delta}.
\label{eqn:Q-expression-with-two-sums}
\end{align}

We first consider the sum containing the M\"obius function.
Since $(k,r)\neq(1,1)$, we have $rk>1$. Therefore, using
Lemma \ref{lem:finite-sum-with-moebius-to-dedekind-zeta} with the choices $s=rk$ and $T=X^{1/k}$ gives
\begin{align}
\sum_{\mathfrak{a}\in\mathcal{J}}
\frac{\mu_K(\mathfrak{a})}
{\Nrm{K/\Q}{\mathfrak{a}}^{rk}}
&=
\frac{1}{\zeta_K(rk)}
\prod_{\mathfrak{p}\mid\mathfrak{m}_0}
\left(
1-\frac{1}
{\Nrm{K/\Q}{\mathfrak{p}}^{rk}}
\right)^{-1}
+
O\left(X^{1/k - r}\right).
\end{align}
Consequently,
\begin{align}
\left(\rho_K^{\mathfrak{m}}\right)^r
\left(
\sum_{\mathfrak{a}\in\mathcal{J}}
\frac{\mu_K(\mathfrak{a})}
{\Nrm{K/\Q}{\mathfrak{a}}^{rk}}
\right)X^r =
\frac{\left(\rho_K^{\mathfrak{m}}\right)^r}
{\zeta_K(rk)}
\prod_{\mathfrak{p}\mid\mathfrak{m}_0}
\left(
1-\frac{1}
{\Nrm{K/\Q}{\mathfrak{p}}^{rk}}
\right)^{-1} X^r + O\left(X^{1/k}\right).
\label{eqn:main-sum-asymptotic}
\end{align}

We next estimate the second sum in \eqref{eqn:Q-expression-with-two-sums}. Set $s:=k(r-\delta)$. Since $r\geq1$ and $0<\delta\leq1$, we have $s\geq0$, so
Lemma \ref{lem:several-cases-estimation} applies with
$T=X^{1/k}$. Hence
\begin{align}
&X^{r-\delta}
\sum_{\mathfrak{a}\in\mathcal{J}}
\frac{1}
{\Nrm{K/\Q}{\mathfrak{a}}^{k(r-\delta)}}
=
\begin{cases}
\displaystyle
O\left(
X^{r-\delta}
X^{\frac{1-k(r-\delta)}{k}}
\right)
=
O\left(X^{1/k}\right),
&
k(r-\delta)<1,
\\[10pt]
\displaystyle
O\left(
X^{r-\delta}\log X
\right),
&
k(r-\delta)=1,
\\[10pt]
\displaystyle
O\left(
X^{r-\delta}
\right),
&
k(r-\delta)>1.
\end{cases}
\label{eqn:secondary-error-cases}
\end{align}

It remains only to compare these estimates with the error
$O(X^{1/k})$ arising from the main sum in
\eqref{eqn:main-sum-asymptotic}. We claim that, in each of the three
cases above, the error term coming from the second sum in
\eqref{eqn:Q-expression-with-two-sums} is at least as large as
$O(X^{1/k})$, and therefore determines the final error term. Indeed,
if $k(r-\delta)<1$, the two error terms are both $O(X^{1/k})$; if
$k(r-\delta)=1$, then $r-\delta=1/k$, so
$O(X^{1/k})$ is absorbed by
$O(X^{r-\delta}\log X)$; and if $k(r-\delta)>1$, then
$r-\delta>1/k$, so $O(X^{1/k})$ is absorbed by
$O(X^{r-\delta})$.

Combining these estimates proves \eqref{eqn:main-theorem-tuples-asymptotic} and \eqref{eqn:main-theorem-tuples-error}.
\end{proof}

\begin{remark}\label{rem:error-r-at-least-two}
Suppose that $d\geq2$ and $r\geq2$. Then
$\delta_d(\mathfrak{m})<1$ and
\[
k\left(r-\delta_d(\mathfrak{m})\right)
\geq
r-\delta_d(\mathfrak{m})
>
1.
\]
Thus, for every $r\geq2$,
\[
E_{K,\mathfrak{m},k,r}(X)
=
O\left(
X^{r-\delta_d(\mathfrak{m})}
\right).
\]
Consequently, when $d\geq2$, all exceptional cases occur for $r=1$.
\end{remark}

\subsection{Proofs of the specialized results}

We conclude by deriving
Theorems \ref{thm:k-free-ideals-in-ray-classes} and
\ref{thm:k-free-ideals-class-and-narrow-class-groups}
from Theorem \ref{thm:main-theorem}. In both cases, we take $r=1$
and specialize the value of $\delta_d(\mathfrak{m})$.

\subsubsection{Proof of Theorem~\ref{thm:k-free-ideals-in-ray-classes}}

For the general ray-class estimate, we take $r=1$ and
\[
\delta_d(\mathfrak{m})=\frac1d
\]
in Theorem~\ref{thm:main-theorem}. Thus, the relevant comparison for the error term is
\[
k\left(1-\frac1d\right)<1,\qquad
k\left(1-\frac1d\right)=1,\qquad\text{or}\qquad
k\left(1-\frac1d\right)>1.
\]

If $d=1$, then
\[
k\left(1-\frac1d\right)=0<1,
\]
and hence
\[
E_{\Q,\mathfrak{m},k}(X)=O(X^{1/k}).
\]

Suppose now that $d=2$. Then
\[
k\left(1-\frac12\right)=\frac{k}{2}.
\]
Therefore, equality occurs when $k=2$, giving
\[
E_{K,\mathfrak{m},2}(X)
=
O(X^{1/2}\log X),
\]
while for $k\geq3$ we have $k/2>1$, and hence
\[
E_{K,\mathfrak{m},k}(X)
=
O(X^{1/2}).
\]

Finally, if $d\geq3$, then for every $k\geq2$,
\[
k\left(1-\frac1d\right)
\geq
2\left(1-\frac1d\right)
>1.
\]
It follows that
\[
E_{K,\mathfrak{m},k}(X)
=
O\left(X^{1-\frac1d}\right).
\]
This proves Theorem~\ref{thm:k-free-ideals-in-ray-classes}.

\subsubsection{Proof of
Theorem~\ref{thm:k-free-ideals-class-and-narrow-class-groups}}

For the ordinary and narrow class groups, we again take $r=1$ in
Theorem~\ref{thm:main-theorem}, but now
\[
\delta_d(\mathfrak{m})=\frac{2}{d+1}.
\]
Since $r=1$ and $\delta_d(\mathfrak{m})=2/(d+1)$ in this case, the quantity
$k(r-\delta_d(\mathfrak{m}))$ appearing in Theorem~\ref{thm:main-theorem}
becomes
\[
k\left(1-\frac{2}{d+1}\right)
=
\frac{k(d-1)}{d+1}.
\]
Hence, the relevant comparison for the error term is
\[
\frac{k(d-1)}{d+1}<1,\qquad
\frac{k(d-1)}{d+1}=1,\qquad\text{or}\qquad
\frac{k(d-1)}{d+1}>1.
\]

If $d=2$, the three cases are determined by whether
\[
\frac{k}{3}<1,\qquad
\frac{k}{3}=1,\qquad\text{or}\qquad
\frac{k}{3}>1.
\]
Thus, for $k=2$ we are in the first case of
Theorem~\ref{thm:main-theorem}, giving
\[
E(X)=O(X^{1/2});
\]
for $k=3$ we are in the equality case, giving
\[
E(X)=O(X^{1/3}\log X);
\]
and for $k\geq4$ we obtain
\[
E(X)=O(X^{1/3}).
\]

If $d=3$, then
\[
\frac{k(d-1)}{d+1}=\frac{k}{2}.
\]
Hence $k=2$ gives the equality case and therefore
\[
E(X)=O(X^{1/2}\log X),
\]
whereas for every $k\geq3$ we have
\[
E(X)=O(X^{1/2}).
\]

Finally, suppose that $d\geq4$. For every $k\geq2$,
\[
\frac{k(d-1)}{d+1}
\geq
\frac{2(d-1)}{d+1}
>1.
\]
Therefore,
\[
E(X)
=
O\left(X^{1-\frac{2}{d+1}}\right).
\]

In the ordinary class-group case, the finite part of the modulus is
trivial and
\[
\rho_K^{(1)}=\frac{\rho_K}{h_K},
\]
while in the narrow class-group case
\[
\rho_K^{\mathfrak{m}_{\infty}}
=
\frac{\rho_K}{h_K^+}.
\]
Substituting these identities into the main term in
Theorem~\ref{thm:main-theorem} gives the two asymptotic formulas stated
in Theorem~\ref{thm:k-free-ideals-class-and-narrow-class-groups}.

\subsubsection{Proof of Corollary \ref{cor:gegenbauer-number-field}}

Take $\mathfrak{m}=(1)$, so that
$\mathcal{I}_K^{\mathfrak{m}}=\mathcal{I}_K$,
$\Cl_K^{\mathfrak{m}}=\Cl_K$ and the Euler product over
$\mathfrak{p}\mid\mathfrak{m}_0$ in
\eqref{eqn:k-free-ideals-ray-class-asymptotic} is empty. Every nonzero integral
ideal lies in exactly one ideal class, so
\[
Q_K^k(X)=\sum_{\mathfrak{C}\in\Cl_K}Q_K^k(X;\mathfrak{C}).
\]
Summing the first asymptotic formula of
Theorem~\ref{thm:k-free-ideals-class-and-narrow-class-groups} over the $h_K$
ideal classes and using $\rho_K^{(1)}=\rho_K/h_K$ gives the main term
$h_K\cdot\rho_K X/(h_K\zeta_K(k))=\rho_K X/\zeta_K(k)$; the error terms are
unchanged, since summing $h_K$ estimates only multiplies the implied constants
by $h_K$.

\bibliography{ideals-in-ray-classes}

@book {landau1909handbuch,
    AUTHOR = {Landau, Edmund},
     TITLE = {Handbuch der {L}ehre von der {V}erteilung der {P}rimzahlen. 2
              {B}\"ande},
      NOTE = {2d ed,
              With an appendix by Paul T. Bateman},
 PUBLISHER = {Chelsea Publishing Co., New York},
      YEAR = {1953},
     PAGES = {xviii+pp. 1--564; ix+pp. 565--1001},
   MRCLASS = {10.0X},
  MRNUMBER = {68565},
MRREVIEWER = {L.\ Schoenfeld},
}

@online{k-free-ideals-code,
  author  = {Barquero-Sanchez, Adrian and
             Heimrath, Jack and
             Sing, Bernd and
             Sirolli, Nicolás and
             Spivey, Caylee and
             Wijaya, Michael},
  title   = {Computations for \emph{The Distribution of $k$-Free Ideals
             in Ray Class Groups}},
  year    = {2026},
  note    = {SageMath 10.8 code accompanying the paper},
  url     = {https://github.com/adrianbs11/k-free-ideals-ray-classes},
  urldate = {2026-08-04},
}

@incollection {huxley1994number,
    AUTHOR = {Huxley, M. N. and Watt, N.},
     TITLE = {The number of ideals in a quadratic field},
      NOTE = {K. G. Ramanathan memorial issue},
   JOURNAL = {Proc. Indian Acad. Sci. Math. Sci.},
  FJOURNAL = {Indian Academy of Sciences. Proceedings. Mathematical
              Sciences},
    VOLUME = {104},
      YEAR = {1994},
    NUMBER = {1},
     PAGES = {157--165},
      ISSN = {0253-4142,0973-7685},
   MRCLASS = {11R47 (11L07 11P21 11R42)},
  MRNUMBER = {1280063},
MRREVIEWER = {John\ B.\ Friedlander},
       DOI = {10.1007/BF02830879},
       URL = {https://doi.org/10.1007/BF02830879},
}

@book {iwaniec-kowalski2004,
    AUTHOR = {Iwaniec, Henryk and Kowalski, Emmanuel},
     TITLE = {Analytic number theory},
    SERIES = {American Mathematical Society Colloquium Publications},
    VOLUME = {53},
 PUBLISHER = {American Mathematical Society, Providence, RI},
      YEAR = {2004},
     PAGES = {xii+615},
      ISBN = {0-8218-3633-1},
   MRCLASS = {11-02 (11Fxx 11Lxx 11Mxx 11Nxx)},
  MRNUMBER = {2061214},
MRREVIEWER = {K.\ Soundararajan},
       DOI = {10.1090/coll/053},
       URL = {https://doi.org/10.1090/coll/053},
}

@book{montgomery-vaughan2007,
  author    = {Montgomery, Hugh L. and Vaughan, Robert C.},
  title     = {Multiplicative Number Theory I: Classical Theory},
  series    = {Cambridge Studies in Advanced Mathematics},
  volume    = {97},
  publisher = {Cambridge University Press},
  address   = {Cambridge},
  year      = {2007}
}

@book {narkiewicz1974,
    AUTHOR = {Narkiewicz, W{\l}adys{\l}aw},
     TITLE = {Elementary and analytic theory of algebraic numbers},
    SERIES = {Monografie Matematyczne},
    VOLUME = {Tom 57},
 PUBLISHER = {PWN---Polish Scientific Publishers, Warsaw},
      YEAR = {1974},
     PAGES = {630 pp. (errata insert)},
   MRCLASS = {12-01},
  MRNUMBER = {347767},
MRREVIEWER = {P.\ J.\ Weinberger},
}

@book {landau1949,
    AUTHOR = {Landau, Edmund},
     TITLE = {Einf\"uhrung in die elementare und analytische {T}heorie der
              algebraischen {Z}ahlen und der {I}deale},
 PUBLISHER = {Chelsea Publishing Co., New York},
      YEAR = {1949},
     PAGES = {vii+147},
   MRCLASS = {10.0X},
  MRNUMBER = {31002},
}

@article {nymann1992distribution,
    AUTHOR = {Nymann, J. E.},
     TITLE = {The distribution of relatively {$r$}-prime integers in residue
              classes},
   JOURNAL = {Rocky Mountain J. Math.},
  FJOURNAL = {The Rocky Mountain Journal of Mathematics},
    VOLUME = {22},
      YEAR = {1992},
    NUMBER = {4},
     PAGES = {1473--1482},
      ISSN = {0035-7596,1945-3795},
   MRCLASS = {11N69 (11N25)},
  MRNUMBER = {1201105},
MRREVIEWER = {G\'{e}rald\ Tenenbaum},
       DOI = {10.1216/rmjm/1181072668},
       URL = {https://doi.org/10.1216/rmjm/1181072668},
}

@article {lehmer1900asymptotic,
    AUTHOR = {Lehmer, Derrick Norman},
     TITLE = {Asymptotic {E}valuation of {C}ertain {T}otient {S}ums},
   JOURNAL = {Amer. J. Math.},
  FJOURNAL = {American Journal of Mathematics},
    VOLUME = {22},
      YEAR = {1900},
    NUMBER = {4},
     PAGES = {293--335},
      ISSN = {0002-9327,1080-6377},
   MRCLASS = {DML},
  MRNUMBER = {1505840},
       DOI = {10.2307/2369728},
       URL = {https://doi.org/10.2307/2369728},
}

@article {cohen1963distribution,
    AUTHOR = {Cohen, E. and Robinson, Richard L.},
     TITLE = {On the distribution of the {$k$}-free integers in residue
              classes},
   JOURNAL = {Acta Arith.},
  FJOURNAL = {Polska Akademia Nauk. Instytut Matematyczny. Acta Arithmetica},
    VOLUME = {8},
      YEAR = {1962/63},
     PAGES = {283--293},
      ISSN = {0065-1036},
   MRCLASS = {10.42},
  MRNUMBER = {157960},
MRREVIEWER = {L.\ Mirsky},
       DOI = {10.4064/aa-8-3-283-293},
       URL = {https://doi.org/10.4064/aa-8-3-283-293},
}

@article{benkoski1976probability,
  title={The probability that $k$ positive integers are relatively $r$-prime},
  author={Benkoski, Stanley J.},
  journal={Journal of Number Theory},
  volume={8},
  number={2},
  pages={218--223},
  year={1976},
  publisher={Elsevier}
}

@article {BP10,
    AUTHOR = {Baker, R. C. and Powell, K.},
     TITLE = {The distribution of {$k$}-free numbers},
   JOURNAL = {Acta Math. Hungar.},
  FJOURNAL = {Acta Mathematica Hungarica},
    VOLUME = {126},
      YEAR = {2010},
    NUMBER = {1-2},
     PAGES = {181--197},
      ISSN = {0236-5294},
   MRCLASS = {11N25},
  MRNUMBER = {2593323},
MRREVIEWER = {S. W. Graham},
       DOI = {10.1007/s10474-009-9042-9},
       URL = {https://doi.org/10.1007/s10474-009-9042-9},
}

@article {Erd60,
    AUTHOR = {Erd\H{o}s, P.},
     TITLE = {\"{U}ber die kleinste quadratfreie {Z}ahl einer arithmetischen
              {R}eihe},
   JOURNAL = {Monatsh. Math.},
  FJOURNAL = {Monatshefte f\"{u}r Mathematik},
    VOLUME = {64},
      YEAR = {1960},
     PAGES = {314--316},
      ISSN = {0026-9255},
   MRCLASS = {10.00},
  MRNUMBER = {118705},
MRREVIEWER = {L. Mirsky},
       DOI = {10.1007/BF01498607},
       URL = {https://doi.org/10.1007/BF01498607},
}

@article {GMRR21,
    AUTHOR = {Gorodetsky, Ofir and Matom\"{a}ki, Kaisa and Radziwi\l\l , Maksym and
              Rodgers, Brad},
     TITLE = {On the variance of squarefree integers in short intervals and
              arithmetic progressions},
   JOURNAL = {Geom. Funct. Anal.},
  FJOURNAL = {Geometric and Functional Analysis},
    VOLUME = {31},
      YEAR = {2021},
    NUMBER = {1},
     PAGES = {111--149},
      ISSN = {1016-443X},
   MRCLASS = {11N25},
  MRNUMBER = {4244849},
MRREVIEWER = {Tsz Ho Chan},
       DOI = {10.1007/s00039-021-00557-5},
       URL = {https://doi.org/10.1007/s00039-021-00557-5},
}

@article {Gege85,
    AUTHOR = {Gegenbauer, Leopold},
     TITLE = {{Asymptotische Gesetze der Zahlentheorie}},
   JOURNAL = {Denkschriften
Akad. Wiss. Wien},
    VOLUME = {49},
      YEAR = {1885},
     PAGES = {37--80},
}

@article {Hoo75,
    AUTHOR = {Hooley, C.},
     TITLE = {A note on square-free numbers in arithmetic progressions},
   JOURNAL = {Bull. London Math. Soc.},
  FJOURNAL = {The Bulletin of the London Mathematical Society},
    VOLUME = {7},
      YEAR = {1975},
     PAGES = {133--138},
      ISSN = {0024-6093},
   MRCLASS = {10A25},
  MRNUMBER = {371799},
MRREVIEWER = {Donald L. Goldsmith},
       DOI = {10.1112/blms/7.2.133},
       URL = {https://doi.org/10.1112/blms/7.2.133},
}

@book {Lan94,
    AUTHOR = {Lang, Serge},
     TITLE = {Algebraic number theory},
    SERIES = {Graduate Texts in Mathematics},
    VOLUME = {110},
   EDITION = {Second},
 PUBLISHER = {Springer-Verlag, New York},
      YEAR = {1994},
     PAGES = {xiv+357},
      ISBN = {0-387-94225-4},
   MRCLASS = {11Rxx (11-01 11-02)},
  MRNUMBER = {1282723},
MRREVIEWER = {M. Ram Murty},
       DOI = {10.1007/978-1-4612-0853-2},
       URL = {https://doi.org/10.1007/978-1-4612-0853-2},
}

@article {Liu14,
    AUTHOR = {Liu, H.-Q.},
     TITLE = {On the distribution of {$k$}-free integers},
   JOURNAL = {Acta Math. Hungar.},
  FJOURNAL = {Acta Mathematica Hungarica},
    VOLUME = {144},
      YEAR = {2014},
    NUMBER = {2},
     PAGES = {269--284},
      ISSN = {0236-5294},
   MRCLASS = {11L07 (11B83)},
  MRNUMBER = {3274401},
MRREVIEWER = {Ping Ding},
       DOI = {10.1007/s10474-014-0454-9},
       URL = {https://doi.org/10.1007/s10474-014-0454-9},
}

@article {Liu16,
    AUTHOR = {Liu, H.-Q.},
     TITLE = {On the distribution of squarefree numbers},
   JOURNAL = {J. Number Theory},
  FJOURNAL = {Journal of Number Theory},
    VOLUME = {159},
      YEAR = {2016},
     PAGES = {202--222},
      ISSN = {0022-314X},
   MRCLASS = {11L07 (11B83)},
  MRNUMBER = {3412720},
MRREVIEWER = {Victor Zhenyu Guo},
       DOI = {10.1016/j.jnt.2015.07.013},
       URL = {https://doi.org/10.1016/j.jnt.2015.07.013},
}

@article {MOT21,
    AUTHOR = {Mossinghoff, Michael J. and Oliveira e Silva, Tom\'{a}s and
              Trudgian, Timothy S.},
     TITLE = {The distribution of {$k$}-free numbers},
   JOURNAL = {Math. Comp.},
  FJOURNAL = {Mathematics of Computation},
    VOLUME = {90},
      YEAR = {2021},
    NUMBER = {328},
     PAGES = {907--929},
      ISSN = {0025-5718},
   MRCLASS = {11N25 (11M26 11N60 11Y35)},
  MRNUMBER = {4194167},
MRREVIEWER = {D. R. Heath-Brown},
       DOI = {10.1090/mcom/3581},
       URL = {https://doi.org/10.1090/mcom/3581},
}

@article{Man21,
    AUTHOR = {Mangerel, Alexander P.},
     TITLE = {Squarefree integers in arithmetic progressions to smooth
              moduli},
   JOURNAL = {Forum Math. Sigma},
  FJOURNAL = {Forum of Mathematics. Sigma},
    VOLUME = {9},
      YEAR = {2021},
     PAGES = {Paper No. e72, 47},
   MRCLASS = {11N25 (11N37 11N69)},
  MRNUMBER = {4332498},
MRREVIEWER = {Donald Jason Gibson},
       DOI = {10.1017/fms.2021.67},
       URL = {https://doi.org/10.1017/fms.2021.67},
}

@article {Nun15,
    AUTHOR = {Nunes, Ramon M.},
     TITLE = {Squarefree numbers in arithmetic progressions},
   JOURNAL = {J. Number Theory},
  FJOURNAL = {Journal of Number Theory},
    VOLUME = {153},
      YEAR = {2015},
     PAGES = {1--36},
      ISSN = {0022-314X},
   MRCLASS = {11A05 (11B25 11L03)},
  MRNUMBER = {3327562},
MRREVIEWER = {S\'{a}ndor Z. Kiss},
       DOI = {10.1016/j.jnt.2014.12.025},
       URL = {https://doi.org/10.1016/j.jnt.2014.12.025},
}

@article {Nun22,
    AUTHOR = {Nunes, Ramon M.},
     TITLE = {Moments of the distribution of {$k$}-free numbers in short
              intervals and arithmetic progressions},
   JOURNAL = {Bull. Lond. Math. Soc.},
  FJOURNAL = {Bulletin of the London Mathematical Society},
    VOLUME = {54},
      YEAR = {2022},
    NUMBER = {4},
     PAGES = {1282--1298},
      ISSN = {0024-6093},
   MRCLASS = {11N37 (11N69)},
  MRNUMBER = {4474124},
MRREVIEWER = {Ofir Gorodetsky},
       DOI = {10.1112/blms.12628},
       URL = {https://doi.org/10.1112/blms.12628},
}

@article {Orr71,
    AUTHOR = {Orr, Richard C.},
     TITLE = {Remainder estimates for squarefree integers in arithmetic
              progression},
   JOURNAL = {J. Number Theory},
  FJOURNAL = {Journal of Number Theory},
    VOLUME = {3},
      YEAR = {1971},
     PAGES = {474--497},
      ISSN = {0022-314X},
   MRCLASS = {10.42},
  MRNUMBER = {286764},
MRREVIEWER = {G. Greaves},
       DOI = {10.1016/0022-314X(71)90015-1},
       URL = {https://doi.org/10.1016/0022-314X(71)90015-1},
}

@article {Pra58,
    AUTHOR = {Prachar, Karl},
     TITLE = {\"{U}ber die kleinste quadratfreie {Z}ahl einer arithmetischen
              {R}eihe},
   JOURNAL = {Monatsh. Math.},
  FJOURNAL = {Monatshefte f\"{u}r Mathematik},
    VOLUME = {62},
      YEAR = {1958},
     PAGES = {173--176},
      ISSN = {0026-9255},
   MRCLASS = {10.0X},
  MRNUMBER = {92806},
MRREVIEWER = {H. N. Shapiro},
       DOI = {10.1007/BF01301288},
       URL = {https://doi.org/10.1007/BF01301288},
}

@misc{SageMathCoCalc,
  author       = {{The Sage Developers}},
  title        = {{SageMath, the Sage Mathematics Software System (Version 10.8)}},
  year         = {2026},
  url          = {https://www.sagemath.org},
  note         = {Computations carried out using SageMath via the CoCalc platform: \url{https://cocalc.com}},
}

@article{Sit10,
    AUTHOR = {Sittinger, Brian D.},
     TITLE = {The probability that random algebraic integers are relatively
              {$r$}-prime},
   JOURNAL = {J. Number Theory},
  FJOURNAL = {Journal of Number Theory},
    VOLUME = {130},
      YEAR = {2010},
    NUMBER = {1},
     PAGES = {164--171},
      ISSN = {0022-314X},
   MRCLASS = {11R04 (11K99 11R42)},
  MRNUMBER = {2569847},
MRREVIEWER = {Zhi-Wei Sun},
       DOI = {10.1016/j.jnt.2009.06.008},
       URL = {https://doi.org/10.1016/j.jnt.2009.06.008},
}

@online{Sut21,
  author  = {Sutherland, Andrew V.},
  title   = {18.785 {N}umber {T}heory {I}},
  year    = {2021},
  url     = {https://math.mit.edu/classes/18.785/2021fa/index.html},
  urldate = {2026-07-09},
  note    = {Lecture notes, Fall 2021, Massachusetts Institute of Technology},
}
\bibliographystyle{alphaurl}

\end{document}